\documentclass[reqno]{amsart}
\usepackage{CJK}
\usepackage{hyperref}
\usepackage{cite}
\usepackage{amsmath}
\usepackage{mathrsfs}

\textheight 20.0cm \textwidth 14.0cm
\numberwithin{equation}{section}

\newtheorem{theorem}{Theorem}[section]
\newtheorem{lemma}[theorem]{Lemma}
\newtheorem{definition}[theorem]{Definition}

\newtheorem{remark}[theorem]{Remark}

\allowdisplaybreaks

\newcommand{ \mint }{ {\int\hspace{-0.38cm}-}}

\begin{document}
	
	\title[\hfil Mixed local and nonlocal double phase parabolic equations] {Regularity of weak solutions for mixed local and nonlocal double phase parabolic equations}
	
	\author[B. Shang and C. Zhang  \hfil \hfilneg]
	{Bin Shang  and Chao Zhang$^*$}
	
	\thanks{$^*$Corresponding author.}

	\address{Bin Shang \hfill\break
		School of Mathematics, Harbin Institute of Technology,
		Harbin 150001, P.R. China} \email{shangbin0521@163.com}

	\address{Chao Zhang\hfill\break
		School of Mathematics and Institute for Advanced Study in Mathematics, Harbin Institute of Technology,
		Harbin 150001, P.R. China} \email{czhangmath@hit.edu.cn}

	\subjclass[2010]{35B45, 35B65, 35D30, 35K55, 35R11}.
	\keywords{Local boundedness; Semicontinuity representative; Pointwise behavior; Mixed local and nonlocal parabolic equations; Double phase}

	\maketitle

\begin{abstract}
We study the mixed local and nonlocal double phase parabolic equation
\begin{align*}
\partial_t u(x,t)-\mathrm{div}(a(x,t)|\nabla u|^{q-2}\nabla u) +\mathcal{L}u(x,t)=0
\end{align*}
in $Q_T=\Omega\times(0,T)$, where $\mathcal{L}$ is the nonlocal $p$-Laplace type operator. The local boundedness of weak solutions is proved by means of the De Giorgi-Nash-Moser iteration with the nonnegative coefficient function $a(x,t)$ being bounded. In addition, when $a(x,t)$ is H\"{o}lder continuous, we discuss the pointwise behavior and lower semicontinuity of weak supersolutions based on energy estimates and De Giorgi type lemma. Analogously, the corresponding results are also valid for weak subsolutions.
\end{abstract}

\section{Introduction}
\par
	
In this paper, we investigate some regularity results of the following problem
\begin{align}
\label{1.1}
\partial_t u(x,t)-\mathrm{div}(a(x,t)|\nabla u|^{q-2}\nabla u) +\mathcal{L}u(x,t)=0 \text{ \ \ in }Q_T,
\end{align}
where  $Q_T:=\Omega\times(0,T)$ with $T>0$ and $\Omega$ being a bounded domain in $\mathbb{R}^N$.
The fractional $p$-Laplace operator $\mathcal{L}$ is given by
\begin{align*}
\mathcal{L}u(x,t)=\mathrm{P.V.}\int_{\mathbb{R}^N}K(x,y,t)|u(x,t)-u(y,t)|^{p-2}\left(u(x,t)-u(y,t)\right)\,dy,
\end{align*}
where P.V. stands for the Cauchy principle value and $K(x, y, t)$ is the symmetric Kernel function satisfies
\begin{align*}
\frac{\Lambda^{-1}}{|x-y|^{N+sp}}\leq K(x,y,t)=K(y,x,t)\leq \frac{\Lambda}{|x-y|^{N+sp}} \ \ \ \forall t\in(0,T), \ \forall x,y\in\mathbb{R}^N
\end{align*}
for some $\Lambda\geq1$. Moreover, we suppose that 
\begin{align}
\label{1.2}
p,q\in (1,\infty), \quad  sp<q, \quad s\in (0,1)
\end{align}
and the coefficient function $a: Q_T\rightarrow \mathbb{R}^+$ satisfies
\begin{align}
\label{1.3}
0\leq a(x,t)\leq \|a\|_{L^\infty(Q_T)}.
\end{align}
\subsection{Overview of related literature}	
Eq. \eqref{1.1} we study in this paper is characterized by different type of operators depending on the size of the coefficient function $a(x,t)$. Observe that $\eqref{1.1}$ behaves as a purely nonlocal $p$-Laplace equation on $\{a(x,t)=0\}$ while leading to the problem involving different orders of differentiation on $\{a(x,t)>0\}$. For the study on regularity theory of nonlocal equations, we refer the readers to \cite{APG22,S19,DKP16,BLS21,L22,APT22,KW23,KKL19,DZZ21,C17}. The mixed local and nonlocal problem have been widely studied recently owing to its essential applications in plasma physics \cite{DLV21} and biology \cite{Bd13}. In the following, we will focus on introducing some known results for the mixed problems.

For the basic prototype of the mixed local and nonlocal equation
\begin{equation}
\label{1.4}
-\Delta u+(-\Delta)^su=0,
\end{equation}
the Harnack inequality and boundary Harnack principle were derived in \cite{F09} and \cite{CKSV12}, respectively; see also \cite{CK10} for the Harnack estimate of globally nonnegative solutions regarding the parabolic version of \eqref{1.4}. Moreover, a weak Harnack inequality with a tail term to the parabolic setting of \eqref{1.4} can be found in \cite{GK21}. Recently, Biagi-Dipierro-Valdinoci-Vecchi \cite{BDVV21, BDVV211} developed systematic research on mixed problems and established several results involving radial symmetry, regularity, maximum principle, and other qualitative properties of weak solutions to \eqref{1.4}.
For the nonlinear scenario of \eqref{1.4}
\begin{align}
\label{1.5}
-\Delta_p u+(-\Delta)^s_p u=0,
\end{align}
the regularity theory involving Harnack inequality, local boundedness, and H\"{o}lder continuity along with semicontinuity representative of weak solutions were discussed by Garain-Kinnunen \cite{GK}. When it comes to the parabolic version of \eqref{1.5}, Shang-Zhang studied the local H\"{o}lder regularity \cite{SZ22} and Harnack inequality \cite{SZ23} via analytic approach. For what concerns existence, uniqueness, local boundedness and higher H\"{o}lder regularity for weak solutions to \eqref{1.5} with the nonhomogeneous term were provided in \cite{GL23}. In particular, De Filippis-Mingione \cite{DM22} first proposed the functionals with nonstandard growth
\begin{align}
\label{1.6}
u \mapsto \int_{\Omega}\left(|D u|^q-f u\right)\,dx+\int_{\mathbb{R}^N} \int_{\mathbb{R}^N} \frac{|u(x)-u(y)|^p}{|x-y|^{N+sp}}\,dxdy,
\end{align}
and proved the maximal regularity of minimizers to \eqref{1.6} with $q>sp$. The case $q<sp$ can be seen as a natural extension of the above problem and the local behaviour including the boundedness, H\"{o}lder continuity and Harnack inequality of the minimizers of \eqref{1.6} were achieved by Ding-Fang-Zhang \cite{DFZ23}. Besides, Byun-Lee-Song \cite{BLS23} considered the regularity results for minimizers of functionals modeled after
\begin{align}
\label{1.7}
u \mapsto \int_{\mathbb{R}^N} \int_{\mathbb{R}^N} \frac{|u(x)-u(y)|^p}{|x-y|^{N+sp}}\,dxdy+\int_{\Omega} a(x)|D u|^q\,dx,
\end{align}
where $0\leq a(x)\leq\|a\|_{L^\infty(\Omega)}$ and $1<p\leq q$. Functional \eqref{1.7} is linked to the local or nonlocal double phase elliptic problems. We refer the readers to \cite{CM15,CM16,FM20,BCM15,FZ23,DP19} and references therein for the regularity results. On the other hand, for the parabolic double phase problem, the gradient higher integrability, Lipschitz truncation method and the Calder\'{o}n-Zygmund type estimates have been provided in \cite{K23,KKM22,KKS23}. Prasad-Tewary \cite{PT23} proved the local boundedness of variational solutions for the nonlocal double phase parabolic equation by variational method.

As far as we know, there are no results concerning the mixed local and nonlocal double phase parabolic equation \eqref{1.1} for the case $p\neq q$, even the coefficient function $a(x,t)$ is a constant. To this end, we are interested in proving the local boundedness by energy estimates and De Giorgi-Nash-Moser iteration. We would like to mention that our approach to establish energy estimates differs from \cite{PT23} since they discussed the variational solutions that can be proved as parabolic minimizers. Furthermore, based on energy estimates and De Giorgi type lemma, we take into account the lower semicontinuity and pointwise behavior of weak supersolutions to \eqref{1.1}, and the related results also hold true for subsolutions.


\subsection{Notations and Definitions}

For the sake of simplicity, we give some notations that will be used later. 

For radius $\rho>0$, let $B_\rho(x)$ be the domain centered at $x\in\mathbb{R}^N$. We denote the forward and backward space-time cylinders by
\begin{align*}
Q_{\rho,\theta}^+(x_0,t_0)=B_\rho(x_0)\times(t_0,t_0+\theta) \ \ \text{and} \ \ Q_{\rho,\theta}^-(x_0,t_0)=B_\rho(x_0)\times(t_0-\theta,t_0).
\end{align*}
Denote the centered cylinders as
\begin{align*}
Q_{\rho,\theta}(x_0,t_0)=Q_{\rho,\theta}^-(x_0,t_0)\cup Q_{\rho,\theta}^+(x_0,t_0).
\end{align*}
If not important, we will omit $(x_0,t_0)$ in the above symbols for clearly.

For any $a,b\in\mathbb{R}$, define
\begin{align*}
J_{p}(a, b)=|a-b|^{p-2}(a-b).
\end{align*}
Set
\begin{align*}
d\mu=d\mu(x,y,t)=K(x,y,t)\,dxdy,
\end{align*}
and $$data:=data(N,p,s,q,\Lambda).$$

We introduce the tail space as
\begin{align*}
L_\alpha^p(\mathbb{R}^{N}):=\left\{v \in L_{\rm{loc}}^p(\mathbb{R}^{N}): \int_{\mathbb{R}^N} \frac{|v(x)|^p}{1+|x|^{N+\alpha}}\,dx<+\infty\right\}, \quad p>0 \text{ and } \alpha>0.
\end{align*}
The tail appearing in estimates is denoted by
\begin{align}
\label{1.8}
\mathrm{Tail}_\infty(v;x_{0},\rho,I) &=\mathrm{Tail}_{\infty}(v; x_0,\rho,t_0-\theta,t_0)\nonumber\\
&:=\operatorname*{ess\sup}_{t \in I}\left(\rho^{sp} \int_{\mathbb{R}^{N} \backslash B_\rho(x_{0})} \frac{|v(x, t)|^{p-1}}{|x-x_0|^{N+sp}}\,dx\right)^{\frac{1}{p-1}}
\end{align}
with $(x_0, t_0)\in\mathbb{R}^N\times(0,T)$ and the interval $I=\left(t_0-\theta, t_0\right) \subseteq(0,T)$. By the definition, it is not hard to verify that $\mathrm{Tail}_\infty(v;x_0,\rho,I)<\infty$ for any $v \in L^{\infty}(I; L_{sp}^{p-1}(\mathbb{R}^{N}))$.

\smallskip

Now, we state the definition of weak solutions to \eqref{1.1} as follows.

\begin{definition}
\label{def-1-1}
We say that 
$$u\in L^q_{\rm{loc}}(0,T;W_{\rm {loc}}^{1,q}(\Omega)) \cap C_{\rm{loc}}(0,T;L_{\rm {loc}}^2(\Omega)) \cap L^{\infty}(0,T;L_{sp}^{p-1}(\mathbb{R}^N))$$
is a local weak sub(super-)solution to \eqref{1.1}, if for every closed interval $[t_1, t_2] \subseteq(0,T)$ and any compact set $K\subset \Omega$, integral equality
\begin{align}
\label{1.9}
&\int_K u(x,t_2)\varphi(x,t_2)\,dx-\int_K u(x,t_1)\varphi(x,t_1)\,dx
-\int_{t_1}^{t_2} \int_K u(x,t)\partial_t \varphi(x,t)\,dxdt\nonumber\\
&\quad+\int_{t_1}^{t_2}\int_K a(x,t)|\nabla u|^{q-2}\nabla u\cdot \nabla\varphi\,dxdt +\int_{t_1}^{t_2} \mathcal{E}(u,\varphi,t)\,dt \leq(\geq) 0
\end{align}
holds for every nonnegative test function $\varphi \in L^q_{\rm{loc}} (0,T;W_0^{1,q}(K))\cap W_{\rm{loc}}^{1,2}(0,T;L^2(K))$, where
\begin{align*}
\mathcal{E}(u,\varphi,t):=\frac{1}{2} \int_{\mathbb{R}^N} \int_{\mathbb{R}^N}&\Big[|u(x,t)-u(y,t)|^{p-2}(u(x,t)-u(y,t))\\
&\cdot (\varphi(x,t)-\varphi(y,t))K(x,y,t)\Big]\,dxdy.
\end{align*}
A function $u$ called a weak solution to \eqref{1.1} that is both a weak supersolution and a weak subsolution.
\end{definition}

Let $u$ be a locally essentially bounded below measure function in $Q_T$, the lower semicontinuous regularization of $u$ is given by
\begin{align*}
u_*(x,t):=\operatorname*{ess\lim\inf}_{(y,\hat{t}) \rightarrow(x,t)}u(y,\hat{t})=\lim_{\rho \rightarrow 0} \operatorname*{ess\inf}_{Q_{\rho,\theta}(x,t)} u  \text{ \ \ for every
}(x,t)\in Q_T.
\end{align*}
Likewise, for the measurable function $u$ that is essentially bounded above in $Q_T$, we define the upper semicontinuous regularization of $u$ as
\begin{align*}
u^*(x,t):=\operatorname*{ess\lim\sup}_{(y,\hat{t}) \rightarrow(x,t)}u(y,\hat{t})=\lim_{\rho \rightarrow 0} \operatorname*{ess\sup}_{Q_{\rho,\theta}(x,t)} u  \text{ \ \ for every
}(x,t)\in Q_T.
\end{align*}

Let $u\in L_{\rm{loc}}^1(Q_T)$, we denote
\begin{align*}
\mathcal{F}:=\left\{(x,t)\in Q_T:|u(x,t)|<\infty, \lim_{\rho \rightarrow 0} \mint_{Q_{\rho,\theta}(x,t)}|u(x,t)-u(y,\hat{t})|\,dyd \hat{t}=0\right\},
\end{align*}
then it follows from the Lebesgue differentiation theorem that $|\mathcal{F}|=|Q_T|$.

\subsection{Main results}

Our results can be stated as follows. The first one we present is the local boundedness result in the supercritical case. Denote
\begin{align*}
a \vee b:=\max \{a,b\}, \quad a_+:=\max \{a,0\}, \quad a_-:=-\min \{a,0\}.
\end{align*}

\begin{theorem} [Local boundedness]
\label{thm-1-2}
Suppose that \eqref{1.2} and \eqref{1.3} hold. Let $p> 2N/(N+2s)$, $\kappa:=1+2s/N$ and $q<p\kappa$. Assume that $u$ is a local weak subsolution to \eqref{1.1}. Let $(x_0,t_0) \in Q_T$, $R>0$ and $Q_R^- \equiv B_R(x_0) \times(t_0-R^{sp},t_0)$ such that $\bar{B}_R(x_0)\subseteq\Omega$ and $[t_0-R^{sp}, t_0] \subseteq(0,T)$. Then for any $\delta=\max\{2,p,q\}$, we have
\begin{align*}
\operatorname*{ess\sup}_{Q_{R/2}^-} u\leq \mathrm{Tail}_\infty\left(u_+;x_0,R/2,t_0-R^{sp}, t_0\right)+C(2+R^{sp-q})^{\frac{1}{\tau}}\left(\mint_{Q_R^-} u_+^\delta\,dx dt\right)^{\frac{s\delta}{\tau(N\kappa+s\delta)}} \vee 1,
\end{align*}
where $\tau=\min\{\delta-q,\delta-p,\delta-2\}$, and the constant $C>0$ only depends on $\|a\|_{L^\infty(Q_T)}$ and the data.
\end{theorem}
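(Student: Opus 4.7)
The plan is to perform a De Giorgi--Nash--Moser iteration on a shrinking nested sequence of cylinders with growing truncation levels, chosen so that the nonlocal tail gets absorbed into the level parameter. I fix radii $R_j=\frac{R}{2}+\frac{R}{2^{j+1}}$ and cylinders $Q_j=B_{R_j}(x_0)\times(t_0-R_j^{sp},t_0)$, together with truncation levels $k_j=k(1-2^{-j-1})$ for a parameter
$k\ge \mathrm{Tail}_\infty(u_+;x_0,R/2,t_0-R^{sp},t_0)$
to be fixed at the end. I set $w_j=(u-k_j)_+$ and let $\varphi_j$ be a smooth cutoff equal to $1$ on $Q_{j+1}$, compactly supported in $Q_j$, with $|\nabla\varphi_j|\le C2^j/R$ and $|\partial_t\varphi_j|\le C2^{jsp}/R^{sp}$.

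The first key step is a Caccioppoli-type energy estimate for $w_j$. Testing the Steklov-averaged version of \eqref{1.9} against $w_j\varphi_j^q$ and sending the averaging parameter to zero yields, after the usual algebraic manipulations, an inequality of the form
\begin{align*}
\sup_{t\in I_{j+1}}\int_{B_{R_{j+1}}} w_j^2\,dx + \iint_{Q_{j+1}}|\nabla w_j|^q\,dx\,dt + \int_{I_{j+1}}[w_j(\cdot,t)]^p_{W^{s,p}(B_{R_{j+1}})}\,dt\le Cb^j\iint_{Q_j}\Bigl(\tfrac{w_j^q}{R^q}+\tfrac{w_j^p}{R^{sp}}+\tfrac{w_j^2}{R^{sp}}\Bigr)dx\,dt
\end{align*}
with $b>1$, where the three right-hand terms arise respectively from the spatial cutoff in the local $q$-term, the double-integral cutoff in the Gagliardo seminorm, and the time-cutoff in the $L^\infty_tL^2_x$ piece. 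The crucial observation is that the long-range nonlocal cross term of the form $\int w_j\varphi_j^q\,\mathrm{Tail}^{p-1}$ is absorbed by the middle right-hand term thanks to the choice $k\ge\mathrm{Tail}_\infty$: on $\{w_j>0\}$ one has $\mathrm{Tail}^{p-1}\le k^{p-1}$, which is reducible to a power of $w_j$ plus an innocuous constant.

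The second step combines this energy estimate with the parabolic fractional Sobolev embedding obtained by interpolating $L^\infty_tL^2_x$ with $L^p_tW^{s,p}_x\hookrightarrow L^p_tL^{Np/(N-sp)}_x$, producing
\begin{align*}
\iint_{Q_{j+1}} w_j^{p\kappa}\,dx\,dt \le C\Bigl(\sup_{t\in I_{j+1}}\int w_j^2\,dx\Bigr)^{\!sp/N}\int_{I_{j+1}}[w_j(\cdot,t)]^p_{W^{s,p}(B_{R_{j+1}})}\,dt,
\end{align*}
with $\kappa=1+2s/N$. Using H\"older to bound each of $w_j^q$, $w_j^p$, $w_j^2$ on the right of the energy estimate by $w_j^\delta$ on the super-level set $A_j=\{w_j>0\}\cap Q_j$ (this is where the unifying exponent $\delta=\max\{2,p,q\}$ enters, producing the deficit $\tau=\min\{\delta-q,\delta-p,\delta-2\}$), combined with the level-set inclusion $A_{j+1}\subseteq\{w_j>k/2^{j+2}\}$, which gives $|A_{j+1}|\le(2^{j+2}/k)^\delta\iint_{Q_j} w_j^\delta$, one arrives at a recursive inequality of the form
\begin{align*}
Y_{j+1}\le Cb^j\bigl(2+R^{sp-q}\bigr)k^{-\tau} Y_j^{1+\sigma}
\end{align*}
for the normalized quantities $Y_j=\mint_{Q_j} w_j^\delta\,dx\,dt$ and a positive exponent $\sigma$ determined by the Sobolev improvement; the prefactor $2+R^{sp-q}$ reflects the two intrinsic time-scales, namely $R^q$ for the local $q$-term and $R^{sp}$ for the nonlocal $p$-term.

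Finally, the standard fast geometric convergence lemma forces $Y_j\to 0$ provided $Y_0$ is small compared to the prefactor. Resolving this smallness condition for $k$ leads to the explicit choice
\begin{align*}
k=\mathrm{Tail}_\infty+C\bigl(2+R^{sp-q}\bigr)^{1/\tau}\Bigl(\mint_{Q_R^-} u_+^\delta\,dx\,dt\Bigr)^{\!s\delta/(\tau(N\kappa+s\delta))}\vee 1,
\end{align*}
where the $\vee 1$ compensates for the possibility that the tail and integral averages be very small, and then $Y_j\to 0$ translates into $(u-k)_+=0$ a.e.\ on $Q_{R/2}^-$, which is the claim. The main obstacle is the bookkeeping underlying the energy estimate, since the three distinct growth exponents $2$, $p$, $q$ coming from the time derivative, the nonlocal $(s,p)$-term, and the local $q$-term must be absorbed simultaneously into a single iteration scheme through the unifying exponent $\delta$ and the supercritical restriction $q<p\kappa$. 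A secondary technical nuisance is the justification of the test function $w_j\varphi_j^q$, which requires a Steklov averaging in time because of the limited temporal regularity of $u$.
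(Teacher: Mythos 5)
Your proposal follows essentially the same route as the paper's proof: the Caccioppoli-type energy estimate for truncations (Lemma~\ref{lem-2-1}), the parabolic fractional Sobolev embedding (Lemma~\ref{lem-1-8}), the pointwise super-level-set bound reducing the three powers $w_j^2$, $w_j^p$, $w_j^q$ to the single unifying power $w_j^\delta$ at the cost of $\tilde k^{-\tau}$, the absorption of the nonlocal tail into the truncation level via $\tilde k \geq \mathrm{Tail}_\infty/2$, and the closure by the fast geometric convergence lemma (Lemma~\ref{lem-1-9}) all coincide with the paper's argument. The only cosmetic discrepancies are your use of the cutoff exponent $q$ where the paper uses $m=\max\{p,q\}$ (innocuous, since when $q<p$ one can replace the mean-value estimate for $|\psi^{m/p}(x)-\psi^{m/p}(y)|$ by the H\"older bound $|\psi(x)-\psi(y)|^{q/p}$) and your labeling as ``H\"older'' what is really the pointwise level-set inclusion $\{u>\tilde k_j\}\subseteq\{w_j>\tilde k/2^{j+2}\}$.
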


In the subcritical range $1<p\leq 2N/(N+2s)$, supposing that the weak solutions 
can be constructed as: for $r>\max\left\{2,\frac{N(2-p)}{sp}\right\}$, there exists a sequence $\{u_k\}_{k\in \mathbb{N}}$ of bounded weak solutions to \eqref{1.1} such that
\begin{align}
\label{1.10}
\|u_k\|_{L_{\rm{loc}}^\infty(0,T;L_{sp}^{p-1}(\mathbb{R}^N))}\leq C
\end{align}
and
\begin{align}
\label{1.11}
u_k\rightarrow u \quad \text{in } L_{\rm{loc}}^r(Q_T) \text{ as } k\rightarrow\infty.
\end{align}

\begin{theorem} [Local boundedness]
\label{thm-1-3}
Suppose that \eqref{1.2} and \eqref{1.3} hold. Let $1<p\leq 2N/(N+2s)$, $ \kappa=1+2s/N$, $q<p\kappa$ and $r>\max\left\{2,\frac{N(2-p)}{sp}\right\}$.
Assume that $u\in  L^r_{\rm loc}(Q_T)$ fulfilling \eqref{1.10} and \eqref{1.11} is a weak subsolution to \eqref{1.1}. Let $(x_0,t_0) \in Q_T$, and $Q_R^- \equiv B_R(x_0) \times(t_0-R^{sp},t_0)$ such that $\bar{B}_R(x_0)\subseteq\Omega$ and $[t_0-R^{sp}, t_0] \subseteq(0,T)$. Then we have
\begin{align*}
\operatorname*{ess\sup}_{Q_{R/2}^-} u\leq& \mathrm{Tail}_\infty\left(u_+;x_0,R/2,t_0-R^{sp}, t_0\right)\\
&+C\left(R^{sp-q}+2\right)^{\frac{1}{r-2-\beta}}\left(\mint_{Q_R^-} u_+^r\,dx dt\right)^{\frac{sp}{(N+sp)(r-2-\beta)}}\\
&\quad\vee\left(R^{sp-q}+2\right)^{\frac{1}{r-p-\beta}} \left(\mint_{Q_R^-} u_+^r\,dx dt\right)^{\frac{sp}{(N+sp)(r-p-\beta)}}\\
&\quad\vee\left(R^{sp-q}+2\right)^{\frac{1}{r-q-\beta}} \left(\mint_{Q_R^-} u_+^r\,dx dt\right)^{\frac{sp}{(N+sp)(r-q-\beta)}},
\end{align*}
where $\beta=N(r-p\kappa)/(N+sp)$, and $C>0$ only depends on  $\|a\|_{L^\infty(Q_T)},r$ and the data.
\end{theorem}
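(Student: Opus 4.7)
The plan is to carry out a De Giorgi iteration on the bounded approximating sequence $\{u_k\}$ produced by \eqref{1.10}--\eqref{1.11} and then pass to the limit $k\to\infty$. Boundedness of each $u_k$ ensures that all truncations $(u_k-\ell)_+$ are admissible test functions in \eqref{1.9} and that the nonlocal tail of $u_k$ is controlled uniformly via \eqref{1.10}; the $L^r_{\rm loc}$ convergence in \eqref{1.11} will then transfer the resulting uniform essential-supremum bound from $u_k$ to $u$.

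The first step is a standard energy inequality on a nested pair of sub-cylinders of $Q_R^-$. For a space-time cutoff $\eta$ supported near $Q_{R/2}^-$ and any level $\ell>0$, testing \eqref{1.9} applied to $u_k$ with $(u_k-\ell)_+\eta^{\max\{q,p\}}$ controls
\begin{align*}
\operatorname*{ess\sup}_t\!\int (u_k-\ell)_+^2\eta^2\,dx+\iint a\,|\nabla (u_k-\ell)_+|^q\eta^q\,dxdt+\iint |(u_k-\ell)_+\eta|^p\,d\mu\,dt
\end{align*}
by integrals $\iint(u_k-\ell)_+^{j}\,dxdt$ for $j\in\{2,p,q\}$ over the enclosing cylinder, together with a contribution of the form $|A_{\ell,n}|\,\ell\,\mathrm{Tail}_\infty(u_+;x_0,R/2,\cdot)^{p-1}$ coming from the nonlocal operator acting outside the support of $\eta$. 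A parabolic Sobolev embedding, combining the $L^\infty L^2\cap L^p W^{s,p}$ control, then yields
\begin{align*}
\iint_{Q_{n+1}}(u_k-\ell)_+^{p\kappa}\,dxdt \leq C\,(\text{energy on }Q_n).
\end{align*}

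Since the hypothesis $r>\max\{2,N(2-p)/(sp)\}$ is equivalent to $r>p\kappa$, I would use Hölder's inequality to interpolate this $L^{p\kappa}$ bound against the a priori $L^r$ information, converting each lower-order term $\iint(u_k-\ell)_+^{j}$ ($j\in\{2,p,q\}$) into an $L^r$ quantity multiplied by a negative power of the level gap. With telescoping levels $\ell_n=\ell_\infty(1-2^{-n})$ and shrinking cylinders $Q_n\to Q_{R/2}^-$, one arrives at a recurrence of the shape
\begin{align*}
Y_{n+1}\leq C\,b^n\,\ell_\infty^{-(r-j-\beta)}(R^{sp-q}+2)\,Y_n^{1+\alpha_j},\qquad Y_n:=\mint_{Q_n}(u_k-\ell_n)_+^r\,dxdt,
\end{align*}
with $\alpha_j>0$ and $\beta=N(r-p\kappa)/(N+sp)$ exactly the Hölder--Sobolev interpolation exponent. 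The fast geometric convergence lemma forces $Y_n\to 0$ whenever $\ell_\infty$ exceeds $C(R^{sp-q}+2)^{1/(r-j-\beta)}(\mint_{Q_R^-}u_+^r)^{sp/((N+sp)(r-j-\beta))}$ for each $j\in\{2,p,q\}$, which is precisely the triple maximum in the statement. Consequently $\operatorname*{ess\sup}_{Q_{R/2}^-}u_k\leq \ell_\infty+\mathrm{Tail}_\infty(u_+;\cdot)$, and a.e.\ convergence extracted from \eqref{1.11} along a subsequence transfers the bound to $u$.

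The main obstacle is the coexistence of three non-homogeneous growth scales inside a single energy inequality—$2$-growth from the time derivative, nonlocal $p$-growth, and local double-phase $q$-growth—which forces the iteration to balance simultaneously against all three contributions and produces both the triple-maximum form of the bound and the scaling bridge $R^{sp-q}+2$ linking local and nonlocal behavior. A secondary technicality is that the tail term arising from the nonlocal operator must be carried through uniformly in $k$; this is exactly the role of the uniform $L^\infty(L^{p-1}_{sp})$ assumption \eqref{1.10}, without which the limit step would fail.
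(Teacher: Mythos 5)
Your overall framing—approximate by the bounded sequence $\{u_k\}$, do De Giorgi iteration, pass to the limit—is the same as the paper's, and the energy estimate and Sobolev embedding steps are correct in spirit. But there is a genuine gap in the iteration step that papers over exactly the difficulty that makes the subcritical case harder than Theorem~\ref{thm-1-2}.

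The Sobolev embedding on an $L^\infty L^2\cap L^pW^{s,p}$ bound produces an $L^{p\kappa}$ estimate, and in the subcritical range $r>p\kappa$. To convert $\iint \tilde w_n^r$ into $\iint \tilde w_n^{p\kappa}$ there is no purely H\"older route upward from $L^{p\kappa}$ to $L^r$ without extra information: one must use
\[
\iint \tilde w_n^r \le \|\tilde w_n\|_{L^\infty}^{\,r-p\kappa}\iint \tilde w_n^{p\kappa},
\]
so the recursion necessarily carries a factor $\|u_k\|_{L^\infty(Q_{n+1}^-)}^{\,r-p\kappa}$ (this is Lemma~\ref{lem-3-3} in the paper). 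Your proposed recurrence
\[
Y_{n+1}\le C\,b^n\,\ell_\infty^{-(r-j-\beta)}(R^{sp-q}+2)\,Y_n^{1+\alpha_j}
\]
already shows $\beta$ in the exponent of $\ell_\infty$ with no $L^\infty$ factor present; this form cannot be reached by a single De Giorgi pass, and the appearance of $\beta$ there is the final answer's signature, not an intermediate step. The paper breaks the circularity created by the $\|u_k\|_\infty$ factor with a second, outer iteration: it defines $M_n=\operatorname{ess\,sup}_{Q_n^-}u_+$ on a nested sequence $Q_n^-\uparrow Q_R^-$, runs the De Giorgi iteration in each annular pair $Q_n^-\subset Q_{n+1}^-$ to obtain a bound of the form $M_n\le C\,M_{n+1}^{\gamma_j}(\dots)+\text{Tail}$ with $\gamma_j=N(r-p\kappa)/((N+sp)(r-j))\in(0,1)$, applies Young's inequality to absorb $M_{n+1}$ as $\varepsilon M_{n+1}$, and then sums a geometric series in $n$. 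That Young step is precisely what replaces $r-j$ by $r-j-\beta$ in the exponent. Without the $M_n$--$M_{n+1}$ interpolation your iteration does not close, since the constant in the recursion blows up with the very quantity you are trying to control.

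A smaller inaccuracy: the hypothesis $r>\max\{2,\,N(2-p)/(sp)\}$ is not \emph{equivalent} to $r>p\kappa$. In the subcritical regime $p\le 2N/(N+2s)$ one has $p\kappa\le 2$, so $r>2$ already yields $r>p\kappa$; the condition $r>N(2-p)/(sp)$ is the strictly stronger requirement that makes the Young exponent $\gamma_2=N(r-p\kappa)/((N+sp)(r-2))$ lie in $(0,1)$, which is what the outer iteration actually needs. Naming that constraint correctly is part of seeing why the double iteration is unavoidable.
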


The following results are regarding the semicontinuous properties of weak solutions. Here, we  assume in addition that
\begin{align}
\label{1.12}
 0\leq a\in C^{\alpha,\frac{\alpha}{2}}(Q_T) \text{\ \ and \ \ }  q\leq sp+\alpha  \text{\ \ for some \ } \alpha\in(0,1].
\end{align}
The function $a\in C^{\alpha,\frac{\alpha}{2}}(Q_T)$ means
\begin{align*}
|a(x,t)-a(y,t)|\leq [a]_{\alpha,\frac{\alpha}{2}}|x-y|^\alpha \text{\ \ and \ \ } |a(x,t)-a(x,s)|\leq [a]_{\alpha,\frac{\alpha}{2}}|t-s|^{\frac{\alpha}{2}}
\end{align*}
for every $(x,y)\in\Omega$ and $(t,s)\in(0,T)$.
\begin{theorem}
\label{thm-1-4}
Let the assumptions \eqref{1.2} and \eqref{1.12} hold. Suppose that $u$ is a local weak solution to \eqref{1.1} that is bounded from below in $\mathbb{R}^N\times(0,T)$. Let $\mu^-$ be a number such that
\begin{align*}
\mu^-\leq \operatorname*{ess\inf}_{\mathbb{R}^N\times(0,T)} u.
\end{align*}
Then we have $u_*(x,t)=u(x,t)$ for all $(x,t)\in\mathcal{F}$. In particular, $u_*$ is a lower semicontinuous representative of $u$ in $Q_T$.
\end{theorem}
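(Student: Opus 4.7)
The plan is to fix an arbitrary Lebesgue point $(x_0,t_0)\in\mathcal{F}$ and prove $u_*(x_0,t_0)=u(x_0,t_0)$; lower semicontinuity of $u_*$ on all of $Q_T$ is then automatic from its defining formula, and $u_*\equiv u$ on $Q_T$ follows since $|\mathcal{F}|=|Q_T|$. Write $L:=u(x_0,t_0)$. The bound $u_*(x_0,t_0)\leq L$ is elementary: the monotone family of essential infima satisfies
\[
\operatorname*{ess\inf}_{Q_{\rho,\theta}(x_0,t_0)} u \;\le\; \mint_{Q_{\rho,\theta}(x_0,t_0)} u(y,\hat{t})\,dy\,d\hat{t},
\]
and the right-hand side converges to $L$ as $\rho\to 0$ by the definition of $\mathcal{F}$, so the limit on the left is at most $L$.

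For the reverse inequality I would first reduce to a nonnegative supersolution by setting $v:=u-\mu^-$: since constants lie in the kernel of both $\mathrm{div}(a|\nabla\cdot|^{q-2}\nabla\cdot)$ and $\mathcal{L}$, the translated function $v\ge 0$ is still a weak supersolution of \eqref{1.1} on $\mathbb{R}^N\times(0,T)$, and only the supersolution half of \eqref{1.9} will enter the proof. Now argue by contradiction: assume $u_*(x_0,t_0)<L-2\sigma$ for some $\sigma>0$, and set $k:=L-\mu^- -\sigma$. Then $k>\operatorname*{ess\inf}_{Q_{\rho,\theta}}v$ for every sufficiently small $\rho$, so the sub-level set $\{v\le k\}\cap Q_{\rho,\theta}(x_0,t_0)$ has strictly positive measure. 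On the other hand, Chebyshev's inequality applied to the Lebesgue point identity $\mint_{Q_{\rho,\theta}}|u-L|\to 0$ forces
\[
\frac{|\{(y,\hat{t})\in Q_{\rho,\theta}(x_0,t_0):\,v(y,\hat{t})\le k\}|}{|Q_{\rho,\theta}(x_0,t_0)|}\;\longrightarrow\;0 \quad\text{as }\rho\to 0^+,
\]
so the bad set is non-empty yet its relative measure is asymptotically negligible.

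The contradiction is closed by invoking a quantitative De Giorgi type lemma for nonnegative weak supersolutions of \eqref{1.1}, which is the analytic workhorse announced in the abstract and which I would develop earlier in the paper: there exists a universal $\nu_0=\nu_0(data,\|a\|_{L^\infty(Q_T)})>0$ such that if $|\{v\le k\}\cap Q_{\rho,\theta}|\le\nu_0|Q_{\rho,\theta}|$ then $v\ge k/2$ a.e.\ on a smaller coaxial cylinder. Applied along the shifted levels $k_j:=L-\mu^- -2^{-j}\sigma$, or through its sharper form that propagates the level up to $k-\varepsilon$, it yields a fixed small cylinder around $(x_0,t_0)$ on which $v\ge L-\mu^--\sigma$ a.e.; this contradicts the inequality $\operatorname*{ess\inf}v<L-\mu^- -2\sigma$, and sending $\sigma\to 0$ gives $u_*(x_0,t_0)\ge L$. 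The main obstacle is the De Giorgi lemma itself in this mixed local--nonlocal double-phase setting: its proof requires a level-set Caccioppoli estimate that simultaneously absorbs the $q$-growth contribution weighted by $a$, the nonlocal Gagliardo $p$-energy, and the tail \eqref{1.8}, and the compatibility condition $q\le sp+\alpha$ combined with the H\"{o}lder continuity of $a$ in \eqref{1.12} is exactly what allows the double-phase piece to be absorbed into the nonlocal energy so that the De Giorgi iteration closes.
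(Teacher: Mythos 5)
Your proposal is correct and structurally identical to the paper's argument: the reverse inequality $u_*(x_0,t_0)\geq u(x_0,t_0)$ at a Lebesgue point is obtained by a Chebyshev estimate on $\mint_{Q_{\rho,\theta}}|u-L|\to 0$, which makes the relative measure of the sub-level set vanish and then feeds into a De Giorgi type lemma for nonnegative weak supersolutions (the paper's Lemma \ref{lem-4-2}). The only cosmetic difference is that the paper abstracts your Lebesgue-point/contradiction step into the property $(\mathcal{D})$ framework and cites it as Theorem \ref{thm-4-1} from Liao's work, whereas you carry it out by hand; the substance is the same, and both reduce the theorem to establishing the De Giorgi lemma in the mixed local--nonlocal double-phase setting. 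One feature you gloss over but the paper treats explicitly: in Lemma \ref{lem-4-2} the scaling of the time length must be chosen by a dichotomy on the size of $a$ near $(x_0,t_0)$ — $\theta=\eta\rho^q$ when $\operatorname*{\max} a \geq 2[a]_{\alpha,\alpha/2}\rho^\alpha$, and $\theta=\eta\rho^{sp}$ otherwise — and it is precisely the combination of this dichotomy with $q\leq sp+\alpha$ that makes the Caccioppoli-based iteration close, as you correctly anticipate.
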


\begin{remark}
\label{rem-1-5}
Based on the same method of proving Theorem \ref{thm-1-4}, we can obtain that a subsolution of \eqref{1.1} has an upper semicontinuous representative in $Q_T$.
\end{remark}

We now present the pointwise behavior of weak supersolutions to \eqref{1.1}, which shows that the value of lower semicontinuous representative $u_*$ at some point can be recovered pointwise from the values at previous times.

\begin{theorem}
\label{thm-1-6}
Let the assumptions \eqref{1.2} and \eqref{1.12} hold. Suppose that $u$ is a local weak supersolution to \eqref{1.1} that is bounded from below in $\mathbb{R}^N\times(0,T)$.  Let $\mu^-$ be a number such that
\begin{align*}
\mu^-\leq \operatorname*{ess\inf}_{\mathbb{R}^N\times(0,T)} u.
\end{align*}
For a positive parameter $\eta$, we set the time levels
\begin{align*}
\theta=\eta\rho^{q} \text {\ \ \ when   } \operatorname*{\max}_{Q_{\rho,\eta\rho^{sp}}(x_0,t_0)} a(x,t) \geq 2[a]_{\alpha,\frac{\alpha}{2}} \rho^\alpha,\nonumber\\
\theta=\eta\rho^{sp} \text {\ \ \ when   } \operatorname*{\max}_{Q_{\rho,\eta\rho^{sp}}(x_0,t_0)} a(x,t) \leq 2[a]_{\alpha,\frac{\alpha}{2}} \rho^\alpha.
\end{align*}
Then, for the  lower semicontinuous representative $u_*$ given by Theorem \ref{thm-1-4}, we have
\begin{align*}
u_*(x,t)=\inf_{\eta>0} \lim _{\varrho \rightarrow 0}\operatorname*{ess\inf}_{Q'_{\rho,\theta}(x,t)} u \text{\ \ for every } (x,t)\in Q_T,
\end{align*}
where $Q'_{\rho,\theta}(x,t)=B_\rho(x)\times(t-2\theta,t-\theta)$. In particular, there holds
\begin{align*}
u_*(x,t)=\operatorname*{ess\lim\inf}_{(y,\hat{t}) \rightarrow(x,t),\ \hat{t}<t} u(y,\hat{t}) \text{\ \ for every } (x,t)\in Q_T.
\end{align*}
\end{theorem}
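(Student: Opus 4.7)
The plan is to prove the displayed identity
\begin{align*}
u_*(x_0, t_0) = \inf_{\eta > 0}\lim_{\rho \to 0}\operatorname*{ess\inf}_{Q'_{\rho,\theta}(x_0, t_0)} u
\end{align*}
by the two opposite inequalities, and then to deduce the ``in particular'' formula by sandwiching the past-time essential liminf between these two quantities. The inequality $u_*(x_0, t_0) \leq \inf_{\eta > 0}\lim_{\rho \to 0}\operatorname*{ess\inf}_{Q'_{\rho,\theta}(x_0, t_0)} u$ is immediate from the definition of $u_*$: since $Q'_{\rho,\theta}(x_0, t_0) \subset Q_{\rho, 2\theta}(x_0, t_0)$ and $\theta = \theta(\rho) \to 0$ as $\rho \to 0$ in either regime, the lower semicontinuous regularization can be computed along the family $\{Q_{\rho, 2\theta(\rho)}(x_0, t_0)\}_{\rho > 0}$, giving
\begin{align*}
u_*(x_0, t_0) = \lim_{\rho \to 0}\operatorname*{ess\inf}_{Q_{\rho, 2\theta}(x_0, t_0)} u \leq \lim_{\rho \to 0}\operatorname*{ess\inf}_{Q'_{\rho,\theta}(x_0, t_0)} u
\end{align*}
for every fixed $\eta > 0$; passing to the infimum in $\eta$ completes this half.

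For the reverse inequality I argue by contradiction. Fix $\lambda$ strictly below $\inf_{\eta > 0}\lim_{\rho \to 0}\operatorname*{ess\inf}_{Q'_{\rho,\theta}(x_0, t_0)} u$; the goal is $u_*(x_0, t_0) \geq \lambda$. By definition of the infimum, for every $\eta > 0$ there exists $\rho_0 = \rho_0(\eta) > 0$ such that $u \geq \lambda$ a.e.\ on $Q'_{\rho, \theta}(x_0, t_0)$ for every $\rho < \rho_0$. The decisive step is to invoke the De Giorgi type lemma for supersolutions of \eqref{1.1} developed in the preceding sections and already exploited in the proof of Theorem \ref{thm-1-4}: applied to the nonnegative truncation $u - \mu^-$, whose nonlocal tail term is controlled by the global bound $u \geq \mu^-$ on $\mathbb{R}^N \times (0,T)$, this lemma propagates a past lower bound ``$u \geq \lambda$ a.e.\ on $Q'_{\rho,\theta}(x_0, t_0)$'' forward in time into a symmetric lower bound ``$u \geq \lambda - \varepsilon$ a.e.\ on $Q_{\tilde\rho, \tilde\theta}(x_0, t_0)$'' with $\tilde\rho, \tilde\theta \to 0$, provided $\eta$ is taken sufficiently large and $\rho$ sufficiently small. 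By the definition of $u_*$ this forces $u_*(x_0, t_0) \geq \lambda - \varepsilon$; letting $\varepsilon \to 0$ and then $\lambda$ increase to the infimum yields the reverse inequality.

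The main obstacle is the bifurcation of the intrinsic time scale between $\theta = \eta \rho^q$ and $\theta = \eta \rho^{sp}$ according to whether $\operatorname*{\max}_{Q_{\rho, \eta \rho^{sp}}(x_0, t_0)} a$ lies above or below $2 [a]_{\alpha,\frac{\alpha}{2}} \rho^\alpha$, together with the fact that along a shrinking sequence $\rho_k \to 0$ the active regime may alternate. The H\"older continuity of $a$ combined with the compatibility $q \leq sp + \alpha$ in \eqref{1.12} are exactly what ensure that in each regime the equation is comparable to either a purely $q$-parabolic or a fractional $p$-parabolic problem, so that the De Giorgi iteration can be carried out on the corresponding intrinsic cylinder with constants depending only on the data; the freedom to take the infimum in $\eta$ at the end absorbs the constants arising from the two regimes. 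Finally, to obtain the ``in particular'' identity one combines the just-established formula with the chain of inclusions
\begin{align*}
Q'_{\rho,\theta}(x_0, t_0) \subset B_\delta(x_0) \times (t_0 - \delta, t_0) \subset Q_{\delta, \delta}(x_0, t_0), \qquad \delta := \max(\rho, 2\theta) \to 0,
\end{align*}
which sandwich $\operatorname*{ess\lim\inf}_{(y, \hat t) \to (x_0, t_0),\, \hat t < t_0} u(y, \hat t)$ between $u_*(x_0, t_0)$ on the one side and $\inf_{\eta > 0}\lim_{\rho \to 0}\operatorname*{ess\inf}_{Q'_{\rho,\theta}(x_0, t_0)} u$ on the other; by the first identity these two bounding quantities agree, forcing the sandwiched quantity to equal both.
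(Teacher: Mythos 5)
Your proposal follows the same two-step scheme as the paper: the easy inequality $u_*(x_0,t_0)\leq\inf_\eta\lim_{\rho\to0}\operatorname*{ess\inf}_{Q'_{\rho,\theta}}u$ directly from the definition (the paper's display~\eqref{4.10}), and the reverse inequality via a forward-in-time De Giorgi lemma that propagates a lower bound from a past time slice into a cylinder covering $(x_0,t_0)$. The ``in particular'' sandwich at the end is also correct. Two points, however, need repair.

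First, the phrase ``provided $\eta$ is taken sufficiently large'' is backwards. The forward propagation lemma (Lemma~\ref{lem-4-3}) \emph{produces} a parameter $\eta_0=\eta_0(b,L,M,\ldots)$ governing the length $\eta_0\rho^{sp}$ (or $\eta_0\rho^q$) of the cylinder on which the lower bound persists, and $\eta_0$ shrinks as $b\to1$, e.g.\ $\eta_0\sim C^{-N/(sp)}(1-b)^{N+2s}L^{2-p}$ in the first regime. For the propagated bound to reach $(x_0,t_0)$ from a starting time $s\in(t_0-2\theta,t_0-\theta)$ one needs the reach $\eta_0\rho^{sp}/2$ to exceed the gap $t_0-s<2\theta=2\eta\rho^{sp}$, i.e.\ $\eta<\eta_0/4$. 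So the free parameter $\eta$ in the statement must be taken \emph{sufficiently small} relative to the $\eta_0$ dictated by the chosen $b$; taking $\eta$ large would push the slab $Q'_{\rho,\theta}$ too far into the past for the lemma's reach.

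Second, you attribute the propagation step to ``the De Giorgi type lemma \dots already exploited in the proof of Theorem~\ref{thm-1-4}'', but that is Lemma~\ref{lem-4-2}, which assumes a smallness-of-measure condition on a \emph{backward} cylinder anchored at $(x_0,t_0)$ and concludes a bound on a centered cylinder around $(x_0,t_0)$; applied at an earlier anchor point it does not reach $t_0$. What the argument actually requires is the separate variant with \emph{initial data at a single time slice} propagated forward (the paper's Lemma~\ref{lem-4-3}), which is stated precisely so that the conclusion cylinder extends forward in time from the initial slice. Your outline conveys the right mechanism, but the citation should be to the initial-value forward lemma, not to the measure-theoretic one, and the quantitative matching between the free $\eta$ of the theorem and the $\eta_0$ of that lemma (small $\eta$, as above) is the point that makes the argument close.
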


\subsection{Auxiliary lemmas}

We introduce several fundamental lemmas. The following one is the useful Sobolev inequality.
\begin{lemma}[Lemma 2.3, \cite{DZZ21}]
\label{lem-1-8}
Let $0<t_1<t_2$, $s\in(0,1)$ and $p\in(1,\infty)$. Then for every
\begin{align*}
u\in L^p\left(t_1,t_2;W^{s,p}(B_\rho)\right) \cap L^\infty\left(t_1,t_2;L^2(B_\rho)\right),
\end{align*}
we have
\begin{align}
\label{1.15}
&\quad\int_{t_1}^{t_2} \mint_{B_\rho}|u(x,t)|^{p(1+\frac{2s}{N})}\,dxdt \nonumber\\
&\leq C\left(\rho^{sp} \int_{t_1}^{t_2}\int_{B\rho}\mint_{B_\rho} \frac{|u(x,t)-u(y,t)|^p}{|x-y|^{N+sp}}\,dxdydt+\int_{t_1}^{t_2} \mint_{B_\rho}|u(x,t)|^p\,dxdt\right) \nonumber\\
&\quad \times \left(\operatorname*{ess\sup}_{t_1<t<t_2} \mint_{B_\rho}|u(x, t)|^2\,dx\right)^{\frac{sp}{N}},
\end{align}
where $C>0$ depends on $s,p$ and $N$.
\end{lemma}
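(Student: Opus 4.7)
The plan is to reproduce the fractional parabolic Sobolev-type interpolation of Lemma~2.3 in \cite{DZZ21}; since the statement is cited, the in-paper proof amounts to a reference, but I would sketch the underlying argument as follows. The driving observation is that $p(1+2s/N)=p+2sp/N$ is precisely the Gagliardo--Nirenberg interpolation exponent between the fractional Sobolev conjugate $p_s^\ast=Np/(N-sp)$ (valid in the subcritical range $sp<N$) and $L^2$, with exactly the weight $sp/N$ on the $L^2$ factor; this is the unique choice making the two competing terms on the right-hand side scale consistently.

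Working at a fixed $t\in(t_1,t_2)$, I would apply H\"older's inequality to the factorisation $|u|^{p+2sp/N}=|u|^p\,|u|^{2sp/N}$ with conjugate exponents $N/(N-sp)$ and $N/(sp)$ — tailored so that $p\cdot N/(N-sp)=p_s^\ast$ and $(2sp/N)\cdot N/(sp)=2$ — to obtain
\[
\int_{B_\rho}|u(\cdot,t)|^{p+\frac{2sp}{N}}\,dx\le\|u(\cdot,t)\|_{L^{p_s^\ast}(B_\rho)}^p\,\|u(\cdot,t)\|_{L^2(B_\rho)}^{\frac{2sp}{N}}.
\]
Combining this with the scale-invariant fractional Sobolev embedding
\[
\|v\|_{L^{p_s^\ast}(B_\rho)}^p\le C\bigl([v]_{W^{s,p}(B_\rho)}^p+\rho^{-sp}\|v\|_{L^p(B_\rho)}^p\bigr),
\]
and converting to averaged integrals (so that the $\rho^N$ and $\rho^{sp}$ factors absorb cleanly into the $\mint$ symbols) leads to the pointwise-in-$t$ inequality
\[
\mint_{B_\rho}|u|^{p+\frac{2sp}{N}}\,dx\le C\left(\rho^{sp}\int_{B_\rho}\mint_{B_\rho}\frac{|u(x,t)-u(y,t)|^p}{|x-y|^{N+sp}}\,dx\,dy+\mint_{B_\rho}|u|^p\,dx\right)\left(\mint_{B_\rho}|u|^2\,dx\right)^{\frac{sp}{N}}.
\]

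Finally I would integrate in $t$ over $(t_1,t_2)$, pulling the last factor out of the integral as its essential supremum (legitimate because the exponent $sp/N$ is independent of $t$) and using Fubini on the Gagliardo double integral to obtain exactly the form stated in Lemma~\ref{lem-1-8}. The main obstacle is technical rather than conceptual: locating the correct H\"older pair, tracking the $\rho^{sp}$ and $\rho^{N}$ normalisations when moving between raw integrals and their averages, and treating the borderline regime $sp\ge N$ — where $p_s^\ast$ is undefined — by replacing it with an arbitrary large finite $r<\infty$ via $W^{s,p}(B_\rho)\hookrightarrow L^r(B_\rho)$ and re-running the same H\"older split, without essential change in the argument.
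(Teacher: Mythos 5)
The statement you are proving is a quoted lemma; the paper offers no proof of its own, only the citation to Ding--Zhang--Zhou, so there is nothing in-text to compare against. Your reconstruction of the underlying argument is the standard one and is correct in the subcritical range $sp<N$: the H\"older split $|u|^{p+2sp/N}=|u|^p\,|u|^{2sp/N}$ with conjugate exponents $N/(N-sp)$ and $N/(sp)$ does give $\|u\|_{L^{p_s^*}}^{p}\|u\|_{L^2}^{2sp/N}$, the scale-invariant fractional Sobolev inequality is correctly stated, and the bookkeeping with the averaged integrals (the $\rho^{-sp}|B_\rho|^{sp/N}\sim 1$ cancellation) and the final ess sup in $t$ are all fine.

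The one place where you are too casual is the claim that the borderline case $sp\ge N$ goes through ``without essential change.'' There the proposed H\"older pair is simply unavailable: $N/(N-sp)$ is no longer a conjugate exponent in $(1,\infty)$, and more seriously the weight $sp/N$ on the $L^2$-factor exceeds $1$, so the term $|u|^{2sp/N}$ now carries an exponent larger than $2$ and cannot be placed in $L^2$ by a downward H\"older step; swapping $p_s^*$ for a large finite $r$ does not repair this, since the constraint is on the $L^2$ side, not the Sobolev side. A genuine argument in that regime requires a different interpolation (for instance using the Morrey embedding $W^{s,p}\hookrightarrow L^\infty$ when $sp>N$ together with a reverse reading of the exponents), and the constant will depend on the geometry differently. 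As written, your sketch is a correct and complete reconstruction only under the additional hypothesis $sp<N$; for the full range claimed in the lemma you would need to either impose that restriction or supply the separate argument, rather than assert that the same split works.
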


Next, we give a classical iterative lemma which will be employed to prove the main results.
\begin{lemma} [Lemma 4.1, \cite{D93}]
\label{lem-1-9}
Let $\{Y_j\}_{j=0}^\infty$ be a sequence of positive numbers, satisfying
\begin{align*}
\ Y_{j+1}\leq Kb^jY_j^{1+\delta},\quad j=0,1,2, \ldots
\end{align*}
for some constants $K$, $b>1$ and $\delta>0$. If
\begin{align*}
Y_0\leq K^{-\frac{1}{\delta}}b^{-\frac{1}{\delta^2}},
\end{align*}
then $Y_j\rightarrow 0$ as $j\rightarrow\infty$.
\end{lemma}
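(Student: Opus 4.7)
The plan is to establish, by induction on $j$, the explicit geometric decay bound
\begin{align*}
Y_j \leq Y_0\, b^{-j/\delta}, \qquad j=0,1,2,\ldots
\end{align*}
Since $b>1$ and $\delta>0$, this bound immediately yields $Y_j \to 0$ as $j\to\infty$, which is the conclusion of the lemma. So the whole task reduces to verifying this single inequality by induction, with the hypothesis $Y_0 \leq K^{-1/\delta} b^{-1/\delta^2}$ designed precisely to close the inductive step.

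For the base case $j=0$, the inequality is an equality. For the inductive step, assume $Y_j \leq Y_0\, b^{-j/\delta}$. Plugging into the recurrence and combining the powers of $b$,
\begin{align*}
Y_{j+1} \leq K b^j Y_j^{1+\delta} \leq K b^j \bigl(Y_0\, b^{-j/\delta}\bigr)^{1+\delta} = K\, Y_0^{1+\delta}\, b^{-j/\delta}.
\end{align*}
To continue the induction one needs this to be at most $Y_0\, b^{-(j+1)/\delta}$, i.e.
\begin{align*}
K\, Y_0^{\delta} \leq b^{-1/\delta},
\end{align*}
which, rearranged, is exactly the smallness hypothesis $Y_0 \leq K^{-1/\delta} b^{-1/\delta^2}$. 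Hence the induction closes and the desired decay follows.

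The only real work is identifying the correct ansatz $Y_0 b^{-j/\delta}$: one reads off the exponent $-1/\delta$ by demanding that the factor $b^j$ produced by the recurrence be exactly absorbed by the $(1+\delta)$-th power of the assumed decay, namely $j - j(1+\delta)/\delta = -j/\delta$. Once this choice is made, matching the remaining constants forces precisely the smallness condition that appears in the hypothesis, so no obstruction arises. The argument uses nothing beyond elementary induction and algebra.
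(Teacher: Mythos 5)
Your proof is correct. The paper itself does not prove this lemma (it cites DiBenedetto, Lemma 4.1 of \cite{D93}), but your argument is essentially the standard one found there: the inductive ansatz in DiBenedetto's text is $Y_j \leq K^{-1/\delta} b^{-1/\delta^2} b^{-j/\delta}$, whereas you use the slightly cleaner $Y_j \leq Y_0\, b^{-j/\delta}$ and push the smallness condition on $Y_0$ into the closing of the inductive step; these are the same argument up to cosmetics, and your computation of the exponent $j - j(1+\delta)/\delta = -j/\delta$ and the resulting condition $K Y_0^\delta \leq b^{-1/\delta}$ are both correct.
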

The article is organized as follows. Section \ref{sec2} is devoted to proving the necessary Caccioppoli type inequality. In Section \ref{sec3}, we will consider the boundedness estimates by using De Giorgi-Nash-Moser iteration. Finally, the semicontinuity representative and pointwise behavior of weak solutions will be discussed in Section \ref{sec4}.

\section{Energy Estimates}
\label{sec2}
In this section, a Caccioppoli type inequality is established. In general, we need to begin with time regularization since the time derivative of a weak solution to \eqref{1.1} does not exist in the Sobolev sense. The specific procedure can be performed as \cite{KKM22,KKS23}, and thus we omit this step here.

\begin{lemma} [Caccioppoli-type inequality]
\label{lem-2-1}
Let $p,q>1$ and every cylinder $Q^-_{\rho,\theta}\subset Q_T$. Suppose that $u$ is a local weak subsolution to \eqref{1.1}. There exists constants $C>0$ depending on the data such that for every $k\in\mathbb{R}$, there holds
\begin{align}
\label{2.1}
&\quad\operatorname*{ess\sup}_{t_0-\theta<t<t_0}\int_{B_\rho} w_+^2(x,t) \psi^m(x,t)\,dx+\int_{t_0-\theta}^{t_0} \int_{B_\rho} a(x,t)|\nabla w_+(x,t)|^q\psi^m(x,t)\,dxdt\nonumber\\
&\quad+\int_{t_0-\theta}^{t_0} \int_{B_\rho} \int_{B_\rho}(\min\{\psi(x,t),\psi(y,t)\})^m
|w_+(x,t)-w_+(y,t)|^p\, d\mu dt\nonumber\\
&\leq C \int_{t_0-\theta}^{t_0} \int_{B_\rho} w_+^2(x,t)|\partial_t \psi^m|\,dxdt+ C \int_{t_0-\theta}^{t_0} \int_{B_\rho}a(x,t)  w_+^q(x,t)|\nabla \psi(x,t)|^q\,dxdt\nonumber\\
&\quad+ C \int_{t_0-\theta}^{t_0} \int_{B_\rho} \int_{B_\rho} (\max\{w_+(x,t), w_+(y,t)\})^p\left|\psi^{\frac{m}{p}}(x,t)-\psi^{\frac{m}{p}}(y,t)\right|^p \, d\mu dt\nonumber\\
&\quad+C \mathop{\mathrm{ess}\sup}_{\stackrel{t_0-\theta<t<t_0}{x \in \mathrm{supp}\,\psi(\cdot,t)}}  \int_{\mathbb{R}^N \backslash B_\rho} \frac{w_+^{p-1}(y,t)}{|x-y|^{N+sp}}\,dy \int_{t_0-\theta}^{t_0} \int_{B_\rho} w_+(x,t) \psi^m(x,t) \, dxdt,
\end{align}
where $w_+(x,t):=\left(u(x,t)-k\right)_+$, $m=\max\{p,q\}$ and $\psi(x,t)\in[0,1]$ is a piecewise smooth cutoff function vanishing on $\partial B_\rho(x_0)\times(t_0-\theta,t_0)$.
\end{lemma}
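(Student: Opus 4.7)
The strategy is the standard one for Caccioppoli estimates in the parabolic mixed setting: after a Steklov/mollification-in-time regularization (which the authors invoke by reference and I would do the same), I would test the inequality \eqref{1.9} with the nonnegative admissible function $\varphi(x,t)=w_+(x,t)\,\psi^m(x,t)$, let the regularization parameter tend to zero, and estimate the three resulting terms separately.

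\textbf{Parabolic term.} Since $\partial_t u\cdot w_+=\tfrac12\partial_t w_+^2$ on $\{u>k\}$ (and $w_+\equiv 0$ elsewhere), integration by parts in time on a slab $(t_0-\theta,t)$ for arbitrary $t\in(t_0-\theta,t_0)$ yields, after the usual limit,
\begin{equation*}
\tfrac12\int_{B_\rho}w_+^2(\cdot,t)\,\psi^m(\cdot,t)\,dx-\tfrac12\int_{t_0-\theta}^{t}\!\!\int_{B_\rho}w_+^2\,\partial_t\psi^m\,dx\,dt,
\end{equation*}
where the initial boundary term vanishes because $\psi(\cdot,t_0-\theta)=0$ (I would choose $\psi$ this way; otherwise it goes into the RHS). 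Taking the essential supremum in $t$ produces the first left-hand term of \eqref{2.1} and generates the $\int w_+^2|\partial_t\psi^m|$ contribution on the right.

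\textbf{Local divergence term.} On $\{u>k\}$ one has $\nabla u=\nabla w_+$, so
\begin{equation*}
a|\nabla u|^{q-2}\nabla u\cdot\nabla\varphi=a|\nabla w_+|^q\psi^m+m\,a|\nabla w_+|^{q-2}\nabla w_+\cdot\nabla\psi\,w_+\psi^{m-1}.
\end{equation*}
Young's inequality (with a small $\varepsilon$, using $a^{1/q}\cdot a^{(q-1)/q}$ to split the weight correctly) absorbs $\varepsilon\int\!\int a|\nabla w_+|^q\psi^m$ into the left-hand side and leaves the $\int\!\int a\,w_+^q|\nabla\psi|^q$ error.

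\textbf{Nonlocal term.} This is the technically heaviest step. By symmetry of $K$, split $\mathcal E(u,\varphi,t)=\mathcal I_1+2\mathcal I_2$ where $\mathcal I_1$ integrates over $B_\rho\times B_\rho$ and $\mathcal I_2$ over $B_\rho\times(\mathbb R^N\setminus B_\rho)$. For $\mathcal I_1$ the key is the algebraic inequality (proved in the DiCastro--Kuusi--Palatucci style, see e.g.\ \cite{DKP16}): for all $a,b\in\mathbb R$ and $\tau_1,\tau_2\in[0,1]$,
\begin{equation*}
J_p(a-b)\bigl(a_+\tau_1^p-b_+\tau_2^p\bigr)\ \geq\ c_p|a_+-b_+|^p\min\{\tau_1^p,\tau_2^p\}-C_p\bigl(\max\{a_+,b_+\}\bigr)^p|\tau_1-\tau_2|^p.
\end{equation*}
Applied pointwise with $a=u(x,t)-k$, $b=u(y,t)-k$, and $\tau_i=\psi^{m/p}(\cdot,t)$, this yields exactly the nonlocal coercive term and the $|\psi^{m/p}(x)-\psi^{m/p}(y)|^p$ error of \eqref{2.1}. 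For $\mathcal I_2$, write
\begin{equation*}
J_p(u(x)-u(y))\,w_+(x)\psi^m(x)\ \geq\ -w_+(x)\psi^m(x)\bigl(u(y)-k\bigr)_-^{\,p-1},
\end{equation*}
by monotonicity of $t\mapsto|t|^{p-2}t$, and then bound $(u(y)-k)_-^{p-1}\leq 2^{p-1}(w_+(y)^{p-1}+|u(y)|^{p-1})$-type estimates combined with $K\leq\Lambda|x-y|^{-N-sp}$. After pulling the essential supremum of the tail integral in $y$ out and integrating in $x,t$, the required tail term appears.

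\textbf{Main obstacle.} The delicate point is the nonlocal cross-region $\mathcal I_2$: the sign of $J_p(u(x)-u(y))$ is not fixed and one must track it carefully against the truncation $k$ in order to land on the tail expression $\int_{\mathbb R^N\setminus B_\rho}w_+^{p-1}(y)/|x-y|^{N+sp}dy$ rather than on an unmanageable absolute-value tail. A second subtlety is the correct Young-splitting of the weight $a(x,t)$ in the local term so that the error term is genuinely $a\,w_+^q|\nabla\psi|^q$ (and not a weightless $w_+^q|\nabla\psi|^q$), which matters in the degenerate regime $\{a=0\}$. Assembling the three pieces, absorbing the $\varepsilon$-terms, and finally taking the essential supremum in $t$ on the parabolic contribution gives \eqref{2.1}.
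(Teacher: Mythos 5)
Your overall strategy is exactly the paper's: test with $\varphi = w_+\psi^m$, split the weak form into parabolic, local $q$-Laplacian, near-field nonlocal, and far-field nonlocal pieces, use the weighted Young splitting $a^{1/q}\cdot a^{(q-1)/q}$ on the divergence term, and invoke the Di Castro--Kuusi--Palatucci-style pointwise inequality on the near-field part. The near-field algebraic inequality you write down is the correct one and matches the paper's \eqref{2.4}.

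However, your far-field (cross-region) estimate contains a genuine sign error that would make the argument fail. You claim
\[
J_p(u(x)-u(y))\,w_+(x)\psi^m(x)\ \geq\ -\,w_+(x)\psi^m(x)\,\bigl(u(y)-k\bigr)_-^{\,p-1},
\]
but this is false: take $p=2$, $k=0$, $u(x)=1$, $u(y)=2$. Then $w_+(x)=1$, $(u(y)-k)_-=0$, and the right side is $0$, while the left side equals $J_2(u(x)-u(y))\,w_+(x)\psi^m(x)=(-1)\cdot 1\cdot\psi^m(x)<0$. The bound must involve the \emph{positive} part $(u(y)-k)_+=w_+(y)$, not the negative part. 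The correct estimate, which is what the paper uses, is as follows: if $w_+(x)>0$ then $w(x)>0$, so $(w(y)-w(x))_+\le w_+(y)$, hence
\[
J_p\bigl(w(x)-w(y)\bigr)\,w_+(x)\ \geq\ -\bigl(w(y)-w(x)\bigr)_+^{\,p-1}w_+(x)\ \geq\ -\,w_+^{\,p-1}(y)\,w_+(x),
\]
with both sides vanishing when $w_+(x)=0$. This directly yields the tail $\int_{\mathbb R^N\setminus B_\rho}w_+^{p-1}(y)/|x-y|^{N+sp}\,dy$ in \eqref{2.1}. Your follow-up ``bound $(u(y)-k)_-^{p-1}\leq 2^{p-1}(w_+(y)^{p-1}+|u(y)|^{p-1})$'' is also internally inconsistent — $(u(y)-k)_-$ is nonzero only where $w_+(y)=0$, and controlling it requires $|k|$, not $w_+(y)$ — and would produce a tail in $|u|^{p-1}$ rather than the required $w_+^{p-1}$. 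You correctly identified this cross-region sign tracking as the delicate step, but as written the inequality you used there does not hold and needs to be replaced by the one above.
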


\begin{proof}
Denote $w_+(x,t):=\left(u(x,t)-k\right)_+$ and $m=\max\{p,q\}$. Choose the piecewise smooth cutoff function $\psi(\cdot,t)\in[0,1]$ that is compactly supported in $B_\rho$ for any $(t_0-\theta,t_0)$. Testing the weak formulation \eqref{1.9} by $\varphi(x,t):=w_+(x,t)\psi^m(x,t)$, we have
\begin{align*}
0&\geq\int_{t_0-\theta}^{t_0}\int_{B_\rho}\partial_t u(x,t) w_+(x,t)\psi^m(x,t)\,dxdt\\
&\quad+\int_{t_0-\theta}^{t_0}\int_{B_\rho}a(x,t)|\nabla u(x,t)|^{q-2}\nabla u(x,t)\nabla(w_+(x,t)\psi^m(x,t))\,dxdt\\
&\quad+\int_{t_0-\theta}^{t_0}\int_{B_\rho}\int_{B_\rho}J_p(w(x,t),w(y,t))\left(w_+(x,t)\psi^m(x,t)-w_+(y,t)\psi^m(y,t)\right)\,d\mu dt\\
&\quad+2\int_{t_0-\theta}^{t_0}\int_{\mathbb{R}^N\backslash B_\rho}\int_{B_\rho}J_p\left(w(x,t),w(y,t)\right)w_+(x,t)\psi^m(x,t)\,d\mu dt\\
&=:I_1+I_2+I_3+I_4.
\end{align*}	
Then we estimate $I_1, I_2, I_3$ and $I_4$ separately. First, we evaluate
\begin{align}
\label{2.2}
I_1&=\frac{1}{2}\int_{t_0-\theta}^{t_0}\int_{B_\rho}\partial_t w_+^2(x,t)\psi^m(x,t)\,dxdt\nonumber\\
&=\frac{1}{2}\int_{B_\rho}w_+^2(x,t)\psi^m(x,t)\,dx\bigg|_{t_0-\theta}^{t_0}-\int_{t_0-\theta}^{t_0}\int_{B_\rho}w_+^2(x,t)\partial_t\psi^m\,dxdt.
\end{align}
By Young's inequality, the term $I_2$ can be treated as
\begin{align}
\label{2.3}
I_2&=\int_{t_0-\theta}^{t_0}\int_{B_
\rho}a(x,t)|\nabla w_+(x,t)|^{q-2}\nabla w_+(x,t)\nabla(w_+(x,t)\psi^m(x,t))\,dxdt\nonumber\\
&\geq \int_{t_0-\theta}^{t_0}\int_{B_\rho}a(x,t)|\nabla w_+(x,t)|^q\psi^m(x,t)\,dxdt\nonumber\\
&\quad-\int_{t_0-\theta}^{t_0} \int_{B_\rho}a(x,t)m\psi^{m-1}|\nabla\psi(x,t)| |\nabla w_+|^{q-1}w_+(x,t)\,dxdt\nonumber\\
&\geq \int_{t_0-\theta}^{t_0} \int_{B_\rho} a(x,t)|\nabla w_+|^q\psi^m(x,t)\,dxdt-\varepsilon\int_{t_0-\theta}^{t_0}\int_{B_\rho}a(x,t)\psi^{\frac{q(m-1)}{q-1}}|\nabla w_+|^q\,dxdt\nonumber\\
&\quad-C(p,q,\varepsilon)\int_{t_0-\theta}^{t_0}\int_{B_\rho}a(x,t)|\nabla \psi(x,t)|^q w_+^q(x,t)\,dxdt\nonumber\\
&\geq\frac{1}{2}\int_{t_0-\theta}^{t_0}\int_{B_\rho}a(x,t)|\nabla w_+|^q\psi^m\,dxdt-C(p,q)\int_{t_0-\theta}^{t_0}\int_{B_\rho}a(x,t)|\nabla \psi|^q w_+^q(x,t)\,dxdt,
\end{align}
where we used the property $0\leq\psi(x,t)\leq 1$ in the last line. For the term $I_3$, we utilize the idea from Lemma 4.1 in \cite{DM22} to obtain the pointwise estimates. We consider the case $u(x,t)\geq u(y,t)$ with some $t\in(0,T)$, while case $u(x,t)\leq u(y,t)$ can be verified by exchanging the role of $x$ and $y$. Denote
\begin{align*}
\Gamma(x,y,t):=J_p(w(x,t),w(y,t))\left(w_+(x,t)\psi^m(x,t)-w_+(y,t)\psi^m(y,t)\right).
\end{align*}
Since $u(x,t)\geq u(y,t)$, it is not hard to check that $\Gamma_1(x,y,t)+\Gamma_2(x,y,t)\leq \Gamma(x,y,t)$, where
\begin{align*}
\Gamma_1(x,y,t):=|w_+(x,t)-w_+(y,t)|^p\psi^m(x,t)
\end{align*}
and
\begin{align*}
\Gamma_2(x,y,t):=J_p(w_+(x,t),w_+(y,t))(\psi^m(x,t)-\psi^m(y,t))w_+(y,t).
\end{align*}
Moreover, we assume that $\psi(x,t)\geq\psi(y,t)$. By Mean Value Theorem with respect to space variable, we have
\begin{align*}
|\Gamma_2(x,y,t)|\leq C\psi^\frac{m(p-1)}{p}(x,t)\left|\psi^\frac{m}{p}(x,t)-\psi^\frac{m}{p}(y,t)\right||w_+(x,t)-w_+(y,t)|^{p-1}w_+(y,t).
\end{align*}
To estimate $\Gamma_1$, we employ Young's inequality to yield
\begin{align*}
\Gamma_1(x,y,t)\leq C \Gamma(x,y,t)+ C\left|\psi^\frac{m}{p}(x,t)-\psi^\frac{m}{p}(y,t)\right|^p w_+^p(y,t).
\end{align*}
Thus, we conclude that
\begin{align}
\label{2.4}
&\quad|w_+(x,t)-w_+(y,t)|^p\psi^m(x,t)\nonumber\\
&\leq C \Gamma(x,y,t)+ C(\max\{w_+(x,t),w_+(y,t)\})^p\left|\psi^\frac{m}{p}(x,t)-\psi^\frac{m}{p}(y,t)\right|^p,
\end{align}
where constants $C$ only depend on $p$ and $q$. Note that the inequality \eqref{2.4} apparently holds when $\psi(x,t)\leq\psi(y,t)$.
Then, it follows from \eqref{2.4} that
\begin{align}
\label{2.5}
I_3&\geq\frac{1}{C}\int_{t_0-\theta}^{t_0}\int_{B_\rho}\int_{B_\rho}(\min\{\psi(x,t),\psi(y,t)\})^m|w_+(x,t)-w_+(y,t)|^p\,d\mu dt\nonumber\\
&\quad-C\int_{t_0-\theta}^{t_0}\int_{B_\rho}\int_{B_\rho}(\max\{w_+(x,t),w_+(y,t)\})^p\left|\psi^{\frac{m}{p}}(x,t)-\psi^{\frac{m}{p}}(y,t)\right|^p\,d\mu dt.
\end{align}
Finally, we consider the term $I_4$. When $w(x,t)>0$, we have
\begin{align*}
&|w(x,t)-w(y,t)|^{p-2}(w(x,t)-w(y,t))w_+(x,t)\\
\geq&-(w(y,t)-w(x,t))_+^{p-1}w_+(x,t)\\
\geq&-w_+^{p-1}(y,t)w_+(x,t).
\end{align*}
When $w(x,t)\leq 0$, the above inequality still holds due to the both sides are zero. Thus we derive that
\begin{align}
\label{2.6}
I_4\geq& -C \int_{t_0-\theta}^{t_0}\int_{B_\rho}\int_{\mathbb{R}^N \backslash B_\rho}w_+^{p-1}(y,t)w_+(x,t)\psi^m(x,t)\,d\mu dt\nonumber\\
\geq&-C \mathop{\mathrm{ess}\sup}_{\stackrel{t_0-\theta<t<t_0}{x \in \mathrm{supp}\,\psi(\cdot,t)}}\int_{\mathbb{R}^N \backslash B_\rho}\frac{w_+^{p-1}(y,t)}{|x-y|^{N+sp}}\,dy\int_{t_0-\theta}^{t_0}\int_{B_\rho}w_+(x,t)\psi^m(x,t)\,dxdt.
\end{align}
Putting the estimates \eqref{2.2}, \eqref{2.3}, \eqref{2.5} and \eqref{2.6} together, we conclude that
\begin{align*}
&\quad\int_{B_\rho} w_+^2(x,t_0) \psi^m(x,t)\,dx+\int_{t_0-\theta}^{t_0} \int_{B_\rho} a(x,t)|\nabla w_+(x,t)|^q\psi^m(x,t)\,dxdt\nonumber\\
&\quad+\int_{t_0-\theta}^{t_0}\int_{B_\rho}\int_{B_\rho}(\min\{\psi(x,t),\psi(y,t)\})^m|w_+(x,t)-w_+(y,t)|^p\,d\mu dt\nonumber\\
&\leq C \int_{t_0-\theta}^{t_0} \int_{B_\rho} a(x,t)|\nabla \psi(x,t)|^q w_+^q(x,t)\,dxdt\nonumber\\
&\quad+ C \int_{t_0-\theta}^{t_0} \int_{B_\rho} \int_{B_\rho} \max \left\{w_+(x,t), w_+(y,t)\right\}^p\left|\psi^{\frac{m}{p}}(x,t)-\psi^{\frac{m}{p}}(y,t)\right|^p \, d\mu dt\nonumber\\
&\quad+C \mathop{\mathrm{ess}\sup}_{\stackrel{t_0-\theta<t<t_0}{x \in \mathrm{supp}\, \psi(\cdot,t)}} \int_{\mathbb{R}^N \backslash B_\rho} \frac{w_+^{p-1}(y,t)}{|x-y|^{N+sp}}\,dy \int_{t_0-\theta}^{t_0} \int_{B_\rho} w_+(x,t) \psi^m(x,t) \, dxdt\nonumber\\
&\quad+C \int_{t_0-\theta}^{t_0} \int_{B_\rho} w_+^2(x,t)|\partial_t \psi^m|\,dxdt,
\end{align*}
which leads to the desired result.
\end{proof}

\section{Local boundedness}
\label{sec3}
In this section, we aim at investigating the local boundedness of weak solutions. Before doing this, we give some notations as below. 

Let $(x_0,t_0)\in Q_T$, $\rho>0$ and $Q_\rho^-\equiv B_\rho(x_0)\times(t_0-\theta,t_0)$ satisfy $\bar{B}_\rho(x_0) \subseteq \Omega$ and $\left[t_0-\theta,t_0\right] \subseteq(0,T)$. For $\sigma\in[1/2,1)$, we take decreasing sequences
\begin{align*}
\rho_0:=\rho, \quad \rho_j:=\sigma \rho+2^{-j}(1-\sigma) \rho, \quad \tilde{\rho}_j:=\frac{\rho_j+\rho_{j+1}}{2}, \quad j=0,1,2, \ldots
\end{align*}
and
\begin{align*}
\theta_0:=\theta, \quad \theta_j:=\sigma \theta+2^{-j}(1-\sigma) \theta, \quad \tilde{\theta}_j:=\frac{\theta_j+\theta_{j+1}}{2}, \quad j=0,1,2, \ldots.
\end{align*}
Set the cylinders
\begin{align*}
Q_j^-:=B_j \times I_j:=B_{\rho_j}(x_0) \times\left(t_0-\theta_j,t_0\right), \quad j=0,1,2, \ldots,
\end{align*}
and
\begin{align*}
\tilde{Q}_j^-:=\tilde{B}_j\times \tilde{I}_j:=B_{\tilde{\rho}_j}(x_{0}) \times\left(t_0-\tilde{\theta}_j,t_0\right), \quad j=0,1,2, \ldots .
\end{align*}

Denote the sequences of increasing levels
\begin{align*}
k_j:=\left(1-2^{-j}\right)\tilde{k}, \quad \tilde{k}_j:=\frac{k_{j+1}+k_j}{2}, \quad j=0,1,2, \ldots
\end{align*}
with
\begin{align*}
\tilde{k} \geq \frac{\mathrm{Tail}_\infty\left(u_+;x_0,\sigma \rho,t_0-\theta,t_0\right)}{2}.
\end{align*}
Let
\begin{align*}
w_j:=(u-k_j)_+, \quad \tilde{w}_j:=\left(u-\tilde{k}_j\right)_+, \quad j=0,1,2, \ldots.
\end{align*}

The following recursive estimate is obtained by utilizing the Caccioppoli type inequality in a suitable cylinder.

\begin{lemma}
\label{lem-3-1}
Let the assumptions \eqref{1.2} and \eqref{1.3} hold. Suppose that $u$ is a local weak subsolutions to \eqref{1.1}. Let $(x_0, t_0) \in Q_T$, $\rho>0$ and $Q_\rho^-=B_\rho(x_0) \times(t_0-\theta,t_0)$ such that $\bar{B}_{\rho}(x_0) \subseteq \Omega$ and $\left[t_0-\theta,t_0\right] \subseteq(0,T)$. Let $\delta>0$ be a parameter such that $\delta\geq\max\{p,q,2\}$. Then it holds for all $j\in\mathbb{N}$ that
\begin{align}
\label{3.1}
&\quad\operatorname*{ess\sup}_{t\in I_{j+1}}\mint_{B_{j+1}} \tilde{w}_j^2(x,t)\, dx+\int_{I_{j+1}}\mint_{B_{j+1}}a(x,t)|\nabla \tilde{w}_j(x,t)|^q\,dxdt\nonumber\\
&\quad+\int_{I_{j+1}} \int_{B_{j+1}} \mint_{B_{j+1}} \frac{|\tilde{w}_j(x,t)-\tilde{w}_j(y,t)|^p}{|x-y|^{N+sp}}\,dxdydt\nonumber\\
&\leq C\left[\frac{1}{\sigma^{sp}(1-\sigma)^{N+s p}}+\frac{1}{(1-\sigma)^m}\right]\nonumber\\
&\quad\times\left[\frac{2^{\delta j}}{\rho^q\tilde{k}^{\delta-q}}+\frac{2^{(\delta+sp-2)j}}{\theta\tilde{k}^{\delta-2}}+\frac{2^{(\delta+m-p)j}}{\rho^{sp}\tilde{k}^{\delta-p}}+\frac{2^{(\delta+N+m-1)j}}{\rho^{sp}\tilde{k}^{\delta-p}}\right]\int_{I_j} \mint_{B_j} w_j^\delta(x,t)\,dxdt,
\end{align}
where $m=\max\{p,q\}$, the constant $C>0$ only depends on the data, $\delta$ and $\|a\|_{L^\infty(Q_T)}$.
\end{lemma}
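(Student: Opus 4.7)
The plan is to apply the Caccioppoli inequality of Lemma \ref{lem-2-1} to $u$ with the truncation level $k=\tilde k_j$, so that $w_+$ becomes $\tilde w_j$, on the cylinder $Q_j^-$, and then to absorb every right-hand term of \eqref{2.1} into the single quantity $\int_{Q_j^-}w_j^{\delta}\,dxdt$ by means of a level-set amplification. I would use the separated cutoff $\psi_j(x,t)=\zeta_j(x)\eta_j(t)$ with $\zeta_j\in C_c^\infty(B_{\rho_j})$, $\zeta_j\equiv 1$ on $B_{\rho_{j+1}}$, and $\eta_j$ supported in $(t_0-\theta_j,t_0)$ with $\eta_j\equiv 1$ on $(t_0-\theta_{j+1},t_0)$, so that
\begin{align*}
|\nabla\psi_j|\leq\frac{C\,2^{j}}{(1-\sigma)\rho},\qquad |\partial_t\psi_j^m|\leq\frac{C\,2^{j}}{(1-\sigma)\theta},\qquad \bigl|\psi_j^{m/p}(x)-\psi_j^{m/p}(y)\bigr|\leq\frac{C\,2^{j}|x-y|}{(1-\sigma)\rho}.
\end{align*}
Since $\psi_j\equiv 1$ on $Q_{j+1}^-$, the left-hand side of \eqref{3.1} is controlled, after dividing by $|B_{j+1}|\sim\rho^N$, by the left-hand side of \eqref{2.1}; the task thus reduces to bounding the four right-hand terms of \eqref{2.1}.

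The key algebraic observation is that on $\{u>\tilde k_j\}$ one has $w_j\geq \tilde k_j-k_j=2^{-(j+2)}\tilde k$ and $\tilde w_j\leq w_j$, whence for every exponent $\alpha\in\{1,2,p,q\}\subseteq[0,\delta]$,
\begin{align*}
\tilde w_j^{\alpha}\leq\frac{w_j^{\delta}}{(\tilde k_j-k_j)^{\delta-\alpha}}=\frac{2^{(\delta-\alpha)(j+2)}}{\tilde k^{\delta-\alpha}}\,w_j^{\delta}\qquad\text{on }\{u>\tilde k_j\},
\end{align*}
and both sides vanish otherwise. Applying this amplification with $\alpha=2$ to the time-derivative term and using $|\partial_t\psi_j^m|\lesssim 2^j/[(1-\sigma)\theta]$ produces the summand involving $\theta\tilde k^{\delta-2}$. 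Applying it with $\alpha=q$ to the term $\int a\,w_+^q|\nabla\psi_j|^q$ and invoking $0\leq a\leq\|a\|_{L^\infty(Q_T)}$ yields the summand $2^{\delta j}/[\rho^q\tilde k^{\delta-q}]$. Applying it with $\alpha=p$ to the nonlocal cross term, combined with the cutoff-difference bound above, $K\leq\Lambda|x-y|^{-N-sp}$, and the elementary estimate $\int_{B_j}|x-y|^{p-N-sp}\,dy\leq C\rho^{p-sp}$ (valid because $sp<p$, which follows from $s<1$), delivers the summand involving $\rho^{sp}\tilde k^{\delta-p}$.

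For the tail term I would exploit that for $x\in\mathrm{supp}\,\psi_j(\cdot,t)\subseteq B_{\tilde\rho_j}$ and $y\in\mathbb{R}^N\setminus B_{\rho_j}$, the triangle inequality together with $\rho_j-\tilde\rho_j=2^{-(j+2)}(1-\sigma)\rho$ and $\rho_j\leq\rho$ gives $|x-y|\geq 2^{-(j+2)}(1-\sigma)|y-x_0|$, so that using $\tilde w_j\leq u_+$ and $|y-x_0|\geq\sigma\rho$,
\begin{align*}
\operatorname*{ess\,sup}_{\substack{t\in I_j\\ x\in\mathrm{supp}\,\psi_j(\cdot,t)}}\int_{\mathbb{R}^N\setminus B_{\rho_j}}\frac{\tilde w_j^{p-1}(y,t)}{|x-y|^{N+sp}}\,dy\leq\frac{C\,2^{j(N+sp)}}{(1-\sigma)^{N+sp}\sigma^{sp}\rho^{sp}}\bigl[\mathrm{Tail}_\infty(u_+;x_0,\sigma\rho,t_0-\theta,t_0)\bigr]^{p-1}.
\end{align*}
The standing assumption $\tilde k\geq\mathrm{Tail}_\infty/2$ converts the bracket into $C\tilde k^{p-1}$; multiplying by $\int_{Q_j^-}\tilde w_j\psi_j^m\,dxdt$ and invoking the amplification with $\alpha=1$ produces the last summand of \eqref{3.1}. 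Summing the four contributions, dividing by $|B_{j+1}|\sim\rho^N$ to restore the averages, and collecting the $(1-\sigma)$ and $\sigma$ factors into the prefactor $[\sigma^{-sp}(1-\sigma)^{-(N+sp)}+(1-\sigma)^{-m}]$ completes the proof. The only genuinely delicate step is the tail estimate, which requires tracking the geometric separation of the dyadic radii simultaneously with the $\mathrm{Tail}_\infty$ hypothesis on $\tilde k$; the rest of the argument is careful accounting of powers of $2^j$, $\rho$, and $\tilde k$.
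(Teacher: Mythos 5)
Your proposal takes essentially the same route as the paper: apply Lemma \ref{lem-2-1} at the level $\tilde{k}_j$ on $Q_j^-$, invoke the level-set amplification $\tilde{w}_j^\alpha \lesssim 2^{(\delta-\alpha)j}\tilde{k}^{-(\delta-\alpha)}w_j^\delta$ (equation \eqref{3.2} in the paper), and bound each of the four right-hand terms separately, with the geometric dyadic-separation argument controlling the tail exactly as you describe. The only cosmetic difference is in the time cutoff --- you use the natural bound $|\partial_t\psi_j^m|\lesssim 2^{j}/[(1-\sigma)\theta]$ while the paper writes $|\partial_t\psi_j|\lesssim 2^{spj}/[(1-\sigma)^{sp}\theta]$, giving a slightly different power of $2^j$ in that term --- but this is harmless since Lemma \ref{lem-3-2} absorbs all dyadic exponents into a single rate $b$.
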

\begin{proof}
For every $0<\tau<\delta$, we can check that
\begin{align}
\label{3.2}
\tilde{w}_j^{\tau}(x,t) \leq \frac{C 2^{(\delta-\tau)j}}{\tilde{k}^{\delta-\tau}} w_j^\delta(x,t) \quad \text { in } Q_T.
\end{align}	
Choose the cutoff functions $\psi_j\in C_0^\infty(Q_j)$ such that
\begin{align*}
0 \leq \psi_j \leq 1, \quad\left|\nabla \psi_j\right| \leq \frac{C 2^j}{(1-\sigma) \rho} \text { in } \tilde{B}_j,\quad \left|\partial_t \psi_j\right| \leq \frac{C 2^{spj}}{(1-\sigma)^{sp}\theta} \text { in } \tilde{I}_j,
\quad \psi_j \equiv 1 \text { in } Q_{j+1}.
\end{align*}
An application of Lemma \ref{lem-2-1} with $\rho=\rho_j$, $\theta=\theta_j$ provides that
\begin{align}
\label{3.3}
&\quad\operatorname*{ess\sup}_{t\in I_{j+1}}\int_{B_{j+1}} \tilde{w}_j^2(x,t)\, dx+\int_{I_{j+1}} \int_{B_{j+1}}a(x,t)|\nabla \tilde{w}_j(x,t)|^q\,dxdt\nonumber \\
&\quad+\int_{I_{j+1}} \int_{B_{j+1}}\int_{B_{j+1}}\frac{\left|\tilde{w}_j(x,t) -\tilde{w}_j(y,t) \right|^p}{|x-y|^{N+sp}}
\, dxdy dt\nonumber\\
&\leq C\int_{I_j} \int_{B_j}a(x,t)|\nabla \psi_j(x)|^q \tilde{w}_j^q(x,t)\,dxdt\nonumber\\
&\quad+C\int_{I_j} \int_{B_j} \int_{B_j}(\max\{\tilde{w}_j(x,t), \tilde{w}_j(y,t)\})^p\left|\psi_j^{\frac{m}{p}}(x,t)-\psi_j^{\frac{m}{p}}(y,t)\right|^p \,d\mu dt\nonumber \\
&\quad+C\mathop{\mathrm{ess}\sup}_{\stackrel{t\in I_j}{x \in \mathrm{supp}\, \psi_j(\cdot,t)}}\int_{\mathbb{R}^N\backslash B_j} \frac{\tilde{w}_j^{p-1}(y,t)}{|x-y|^{N+s p}}\,dy\int_{I_j} \int_{B_j}\tilde{w}_j(x,t)\psi_j^m(x,t) \,d xdt\nonumber\\	&\quad+C\int_{I_j} \int_{B_j} \tilde{w}_j^2(x,t)\left|\partial_t \psi^m\right|\,dxdt\nonumber\\
&=: I_1+I_2+I_3+I_4.
\end{align}
By the definition of $\psi_j$ and \eqref{3.2}, we have
	\begin{align}
	\label{3.4}
	I_1&\leq \frac{C2^{qj}\|a\|_{L^\infty(Q_T)} }{(1-\sigma)^{q} \rho^q} \int_{I_j} \int_{B_j} \tilde{w}_j^q(x,t)\,dxdt\nonumber\\
	&\leq \frac{C 2^{\delta j}}{\tilde{k}^{\delta-q}(1-\sigma)^{q} \rho^q} \int_{I_j} \int_{B_j} w_j^\delta(x,t)\,dxdt
	\end{align}
	and
	\begin{align}
	\label{3.5}
	I_2&\leq \frac{C 2^{(m+\delta-p)j}}{\tilde{k}^{\delta-p}(1-\sigma)^{m}\rho^{sp}} \int_{I_j} \int_{B_j} w_j^\delta(x,t)\,dxdt.
	\end{align}
	Observe that
	\begin{align*}
	\frac{|y-x_0|}{|y-x|}\leq 1+\frac{|x-x_0|}{|x-y|}\leq 1+\frac{\tilde{\rho}_j}{\rho_j-\tilde{\rho}_j}\leq 4+\frac{2^{j+2}\sigma}{1-\sigma},
	\end{align*}
	then we obtain
	\begin{align*}
	&\mathop{\mathrm{ess}\sup}_{\stackrel{t\in I_j}{x \in \mathrm{supp}\  \psi_j(\cdot,t)}}\int_{\mathbb{R}^N \backslash B_j}\frac{\tilde{w}_j^{p-1}(y,t)}{|x-y|^{N+s p}}\,dy\\
	\leq&\frac{C 2^{(N+sp)j}}{(1-\sigma)^{N+sp}}\operatorname*{ess\sup}_{t\in I_j}\int_{\mathbb{R}^N \backslash B_j}\frac{\tilde{w}_j^{p-1}(y,t)}{|x_0-y|^{N+s p}}\,dy\\
	\leq&\frac{C 2^{(N+sp)j}}{(1-\sigma)^{N+sp}}\operatorname*{ess\sup}_{t\in I_j}\int_{\mathbb{R}^N \backslash B_{\sigma\rho}}\frac{\tilde{w}_0^{p-1}(y,t)}{|x_0-y|^{N+s p}}\,dy\\
	\leq&\frac{C 2^{(N+sp)j}}{\rho^{sp}\sigma^{sp}(1-\sigma)^{N+sp}}[\mathrm{Tail}_\infty(u_+;x_0,\sigma \rho,t_0-\theta,t_0)]^{p-1},
	\end{align*}
	where we used formula \eqref{1.8}. Recalling the choice of $\tilde{k}$, it can be derived from \eqref{3.2} that
	\begin{align}
	\label{3.6}
	I_3\leq\frac{2^{(N+sp+\delta-1)j}}{(1-\sigma)^{N+sp}\rho^{sp}\sigma^{sp}\tilde{k}^{\delta-p}}\int_{I_j}\int_{B_j} w_j^\delta(x,t)\,dxdt.
	\end{align}
	Moreover, the term $I_4$ can be estimated still by \eqref{3.2} as
	\begin{align}
	\label{3.7}
	I_4&\leq \frac{C 2^{spj}}{(1-\sigma)^{sp} \theta} \int_{I_j} \int_{B_j} w_j^2(x,t)\,dxdt\nonumber\\
	&\leq\frac{C 2^{(sp+\delta-2)j}}{\tilde{k}^{\delta-2}(1-\sigma)^{sp} \theta} \int_{I_j} \int_{B_j} w_j^\delta(x,t)\,dxdt.
	\end{align}
	 We conclude from \eqref{3.3}--\eqref{3.7} that
	\begin{align*}
	&\operatorname*{ess\,\sup}_{t\in I_{j+1}}\mint_{B_{j+1}} \tilde{w}_j^2(x,t)\,dx+\int_{I_{j+1}}\mint_{B_{j+1}}a(x,t)|\nabla \tilde{w}_j(x,t)|^q\,dxdt\\
	&+\int_{I_{j+1}} \int_{B_{j+1}} \mint_{B_{j+1}} \frac{|\tilde{w}_j(x,t)-\tilde{w}_j(y,t)|^p}{|x-y|^{N+sp}}\,dxdydt\nonumber\\
	\leq& C \left[\frac{1}{(1-\sigma)^q}+\frac{1}{(1-\sigma)^m}+\frac{1}{(1-\sigma)^{sp}}+\frac{1}{(1-\sigma)^{N+sp}\sigma^{sp}}\right]\\ &\times\left[\frac{2^{\delta j}}{\rho^q\tilde{k}^{\delta-q}}+\frac{2^{(m+\delta-p)j}}{\rho^{sp}\tilde{k}^{\delta-p}}+\frac{2^{(N+sp+\delta-1)j}}{\rho^{sp}\tilde{k}^{\delta-p}}+\frac{2^{(sp+\delta-2)j}}{\theta\tilde{k}^{\delta-2}}\right]\int_{I_j} \mint_{B_j} w_j^\delta(x,t)\,dxdt,
	\end{align*}
	as desired.
\end{proof}


\begin{lemma}
	\label{lem-3-2}
    Let the assumptions \eqref{1.2} and \eqref{1.3} hold, $p>2N/(N+2s), \kappa=1+2s/N$ and $q<p\kappa$. Suppose that $u$ is a local weak subsolution to \eqref{1.1}. Let $(x_0,t_0) \in Q_T$, $\rho>0$ and $Q_\rho^-=B_\rho(x_0) \times(t_0-\theta,t_0)$ satisfy $\bar{B}_\rho(x_0) \subseteq \Omega$ and $\left[t_0-\theta,t_0\right] \subseteq(0,T)$. Let $\delta$ be a parameter such that $\max\{p,q,2\}\leq\delta\leq\kappa p$. Then we infer for all $j\in \mathbb{N}$ that
	\begin{align}
	\label{3.8}
	\frac{1}{\rho^{s p}} \int_{I_{j+1}} \mint_{B_{j+1}} w_{j+1}^\delta(x, t) d x d t \leq C 2^{bj} \tilde{B}\left(\frac{A_k}{\rho^{sp}} \int_{I_j} \mint_{ B_j} w_j^\delta d x d t\right)^{1+\frac{s \delta}{N \kappa}},
	\end{align}
	where $C>0$ depends on $\|a\|_{L^\infty(Q_T)},\delta$, the data and
	\begin{align*}
	&b=\left(1+\frac{s p}{N}\right)\left(N+m+\delta\right), \quad A_k  :=\frac{1}{\tilde{k}^{\delta-p}}+\frac{\rho^{sp-q }}{\tilde{k}^{\delta-q}}+\frac{\rho^{s p}}{\theta \tilde{k}^{\delta-2}},\\
	 &\tilde{B}=\left[\frac{1}{\sigma^{p s}(1-\sigma)^{N+p s}}+\frac{1}{(1-\sigma)^m}\right]^{\left(1+\frac{s p}{N}\right)\frac{\delta}{p \kappa}} .
	\end{align*}
\end{lemma}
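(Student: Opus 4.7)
The goal is to convert the Caccioppoli estimate \eqref{3.1} into a recursive inequality of the form \eqref{3.8} by invoking the parabolic Sobolev embedding (Lemma \ref{lem-1-8}) and a level-set argument. First, I would apply Lemma \ref{lem-1-8} to $\tilde w_j$ on the cylinder $Q_{j+1}^-$. The right-hand side then features (i) the Gagliardo seminorm piece (controlled by the third term on the left of \eqref{3.1}), (ii) an extra $L^p$ term, and (iii) the $L^\infty_t L^2_x$-supremum (bounded by the first term on the left of \eqref{3.1}). The $L^p$ term is absorbed using the pointwise inequality $|\tilde w_j|^p \leq C\,2^{(\delta-p)j}\tilde k^{-(\delta - p)}w_j^\delta$, which is an instance of \eqref{3.2}. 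Combining these observations yields
\[
\int_{I_{j+1}}\mint_{B_{j+1}} |\tilde w_j|^{p\kappa}\,dxdt \leq C\bigl[\text{RHS of \eqref{3.1}}\bigr]^{1+\frac{sp}{N}}.
\]

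Next, I would transfer this bound from $\tilde w_j$ to $w_{j+1}$ via a standard level-set argument. Observe that $\{w_{j+1}>0\}\subseteq \{u > k_{j+1}\}$, and on this set $\tilde w_j=(u-\tilde k_j)_+\geq k_{j+1}-\tilde k_j = 2^{-(j+2)}\tilde k$. Combining $w_{j+1}\leq \tilde w_j$ with Hölder's inequality of exponent $p\kappa/\delta$ (permitted since $\delta \leq p\kappa$ by hypothesis) and Chebyshev's inequality, I obtain
\[
\int w_{j+1}^\delta \leq \Bigl(\int \tilde w_j^{p\kappa}\Bigr)^{\delta/(p\kappa)} \Bigl(\tfrac{C\,2^{\delta j}}{\tilde k^\delta}\int w_j^\delta\Bigr)^{1-\delta/(p\kappa)}.
\]
Plugging in the $L^{p\kappa}$-bound above, the total exponent of $\int w_j^\delta$ becomes $\bigl(1+\tfrac{sp}{N}\bigr)\tfrac{\delta}{p\kappa} + 1 - \tfrac{\delta}{p\kappa} = 1 + \tfrac{s\delta}{N\kappa}$, exactly the exponent appearing in \eqref{3.8}.

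The remaining work is constant bookkeeping. Extracting $\rho^{-sp}$ from the bracket in \eqref{3.1} turns each of its four summands into a $2^{O(j)}$ multiple of one of the three summands comprising $A_k$ (the $\rho^{-sp}\tilde k^{-(\delta-p)}$ term appears twice, differing only in the $j$-power), so the bracket is bounded by $\rho^{-sp}\,2^{O(j)}A_k$. The $\sigma$-dependent prefactor in \eqref{3.1}, raised to the appropriate power coming from the chain Sobolev~$\circ$~Hölder, produces exactly the $\tilde B$ of the statement; the dominant exponential factor $2^{(\delta+N+m-1)j}$, raised to $1+sp/N$ and combined with the Chebyshev contribution $2^{\delta(1-\delta/(p\kappa))j}$, packages into $2^{bj}$ with $b=(1+sp/N)(N+m+\delta)$. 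The main obstacle is precisely this bookkeeping: one must patiently propagate every factor of $\rho,\theta,\tilde k,\sigma$, and $2^j$ through the Sobolev, Hölder, and Chebyshev steps so that the final constant lands cleanly in the prescribed form $C\,2^{bj}\tilde B\,A_k^{1+s\delta/(N\kappa)}$; the analytic ingredients themselves are routine once Lemma \ref{lem-3-1} is in hand.
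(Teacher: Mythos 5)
Your proposal is correct and follows essentially the same route as the paper: Hölder with exponent $p\kappa/\delta$ on $\int\tilde w_j^\delta$, the Sobolev embedding of Lemma \ref{lem-1-8} combined with the Caccioppoli recursion of Lemma \ref{lem-3-1} to control $\int\tilde w_j^{p\kappa}$, and the level-set bound on $|\{u\geq\tilde k_j\}|$ (your ``Chebyshev'' step is exactly \eqref{3.2} applied to $\chi_{\{u\geq\tilde k_j\}}$, i.e.\ the paper's \eqref{3.10}). The exponent arithmetic $(1+\tfrac{sp}{N})\tfrac{\delta}{p\kappa}+1-\tfrac{\delta}{p\kappa}=1+\tfrac{s\delta}{N\kappa}$ and the bookkeeping for $A_k$, $\tilde B$, and $2^{bj}$ match the paper's computation.
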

    \begin{proof}
    Since $\delta\leq p\kappa$, we derive from H\"{o}lder inequality that
    \begin{align}
    \label{3.9}
    &\quad\int_{I_{j+1}} \mint_{B_{j+1}} w_{j+1}^\delta(x,t)\,dxdt \nonumber\\
    &\leq \int_{I_{j+1}} \mint_{B_{j+1}} \tilde{w}_j^\delta(x,t)\,dxdt\nonumber\\
    &\leq\left(\int_{I_{j+1}} \mint_{B_{j+1}} \tilde{w}_j^{p \kappa}(x,t)\,dxdt\right)^{\frac{\delta}{p\kappa}}\left(\int_{I_{j+1}} \mint_{B_{j+1}} \chi_{\{u \geq \tilde{k}_j\}}(x,t)\,dxdt\right)^{1-\frac{\delta}{p \kappa}}.
    \end{align}
    In view of \eqref{3.2}, we can directly get
    \begin{align}
    \label{3.10}
    \int_{I_{j+1}} \mint_{B_{j+1}} \chi_{\{u \geq \tilde{k}_j\}}(x,t)\,dxdt\leq \frac{C 2^{\delta j}}{\tilde{k}^\delta} \int_{I_j} \mint_{B_j} w_j^\delta(x,t)\,dxdt
    \end{align}
    and
    \begin{align}
    \label{3.11}
    \int_{I_{j+1}} \mint_{B_{j+1}} \tilde{w}_j^p(x,t)\,dxdt \leq \frac{C 2^{(\delta-p)j}}{\tilde{k}^{\delta-p}} \int_{I_j} \mint_{B_j} w_j^\delta(x,t)\,dxdt.
    \end{align}	
    Taking into account \eqref{3.11}, Lemma \ref{lem-1-8} and Lemma \ref{lem-3-1}, there holds that
    \begin{align}
    \label{3.12}
    &\quad\int_{I_{j+1}}\mint_{B_{j+1}}\tilde{w}_j^{p\kappa}(x,t)\,dxdt\nonumber\\
    &\leq C \rho^{sp}
    \left(\int_{I_{j+1}} \mint_{B_{j+1}} \mint_{B_{j+1}} \frac{|\tilde{w}_j(x,t)-\tilde{w}_j(y,t)|^p}{|x-y|^{N+sp}}
    \,dxdydt
    +\frac{1}{\rho^{sp}}\int_{I_{j+1}} \mint_{B_{j+1}} \tilde{w}_j^p(x,t)\,dxdt\right)\nonumber\\
    &\quad\times \left(\operatorname*{ess\sup}_{t\in I_{j+1}} \mint_{B_{j+1}} \tilde{w}_j^2(x,t)\,dx\right)^{\frac{sp}{N}}\nonumber\\
    &\leq C \rho^{sp}\left[\frac{2^{\delta j}}{\rho^q\tilde{k}^{\delta-q}}+\frac{2^{(sp+\delta-2)j}}{\theta\tilde{k}^{\delta-2}}+\frac{2^{(N+m+\delta-1)}}{\rho^{sp}\tilde{k}^{\delta-p}}\right]^{1+\frac{sp}{N}}
    \times\left(B\int_{I_j}\mint_{ B_j} w_j^\delta \,dxdt\right)^{1+\frac{sp}{N}}\nonumber\\
    &=C \rho^{sp}\left[\frac{2^{\delta j}\rho^{sp-q}}{\tilde{k}^{\delta-q}}+\frac{2^{(sp+\delta-2)j}\rho^{sp}}{\theta\tilde{k}^{\delta-2}}+\frac{2^{(N+m+\delta-1)}}{\tilde{k}^{\delta-p}}\right]^{1+\frac{sp}{N}}
    \times\left(\frac{B}{\rho^{sp}}\int_{I_j}\mint_{ B_j} w_j^\delta \,dxdt\right)^{1+\frac{sp}{N}}\nonumber\\
    &\leq C 2^{bj}\rho^{sp} \left(\frac{B A_k}{\rho^{sp}}\int_{I_j}\mint_{ B_j} w_j^\delta\,dxdt\right)^{1+\frac{sp}{N}},
    \end{align}	
    where
    \begin{align*}
    &b=\left(1+\frac{s p}{N}\right)\left(N+m+\delta\right), \quad A_k  :=\frac{1}{\tilde{k}^{\delta-p}}+\frac{\rho^{sp-q }}{\tilde{k}^{\delta-q}}+\frac{\rho^{s p}}{\theta \tilde{k}^{\delta-2}},\\
    &B=\frac{1}{\sigma^{sp}(1-\sigma)^{N+sp}}+\frac{1}{(1-\sigma)^m}.
    \end{align*}
     Substituting \eqref{3.10} and \eqref{3.12} into \eqref{3.9}, we yield the claim \eqref{3.8} with $\tilde{B}=B^{\left(1+\frac{s p}{N}\right)\frac{\delta}{p \kappa}}$.
    \end{proof}

\begin{lemma}
	\label{lem-3-3}
	Let  the assumptions \eqref{1.4} and \eqref{1.5} hold, $p\leq 2N/(N+2s)$, $\kappa=1+2s/N$, $q<p\kappa$ and $r>\max\left\{2,\frac{N(2-p)}{sp}\right\}$. Suppose that $u\in L_{\rm{loc}}^\infty(Q_T)$ be a local weak subsolution to \eqref{1.1}. Let $(x_0,t_0) \in Q_T$, $\rho>0$ and $Q_\rho^-=B_\rho(x_0) \times(t_0-\theta,t_0)$ such that $\bar{B}_\rho(x_0) \subseteq \Omega$ and $\left[t_0-\theta,t_0\right] \subseteq(0,T)$. For any $j\in \mathbb{N}$, we have
	
	\begin{align}
	\label{3.13}
	&\frac{1}{\rho^{sp}}\int_{I_{j+1}}\mint_{B_{j+1}}w_{j+1}^r(x,t)\,dxdt\nonumber\\
	\leq& C 2^{b'j}\left\|\tilde{w}_j\right\|_{L^\infty\left(Q_{j+1}^-\right)}^{r-p\kappa}\left(\frac{A'_k B}{\rho^{sp}}\int_{I_j}\mint_{B_j} w_j^r\,dxdt\right)^{1+\frac{sp}{N}},
	\end{align}
	where $C>0$ only depends on $r,\|a\|_{L^\infty(Q_T)}$, the data and
	\begin{align*}
	&b':=\left(1+\frac{sp}{N}\right)\left(N+m+r\right), \ A'_k  :=\frac{1}{\tilde{k}^{r-p}}+\frac{\rho^{sp-q }}{\tilde{k}^{r-q}}+\frac{\rho^{s p}}{\theta \tilde{k}^{r-2}},\\
	& B=\frac{1}{\sigma^{sp}(1-\sigma)^{N+sp}}+\frac{1}{(1-\sigma)^m}.
	\end{align*}
\end{lemma}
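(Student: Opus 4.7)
The argument runs in parallel with the proof of Lemma \ref{lem-3-2}, with one structural change forced by the subcritical regime. Since $p\leq 2N/(N+2s)$ gives $p\kappa\leq 2 \leq r$, the H\"older splitting used in \eqref{3.9} (which required $\delta\leq p\kappa$) is no longer available for $\delta=r$. The replacement is to factor out an $L^\infty$ norm: because $w_{j+1}\leq \tilde w_j$ and $r>p\kappa$,
\begin{align*}
\int_{I_{j+1}}\mint_{B_{j+1}} w_{j+1}^{r}\,dxdt \leq \|\tilde w_j\|_{L^\infty(Q_{j+1}^-)}^{\,r-p\kappa} \int_{I_{j+1}}\mint_{B_{j+1}} \tilde w_j^{p\kappa}\,dxdt.
\end{align*}
Here the qualitative assumption $u\in L^\infty_{\rm loc}(Q_T)$ is decisive; it is precisely the ingredient missing in the subcritical range that one compensates for by hypothesis.

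\textbf{Bounding the $L^{p\kappa}$ integral.} To the rightmost integral I apply the parabolic Sobolev embedding (Lemma \ref{lem-1-8}), yielding a product of the fractional Gagliardo energy of $\tilde w_j$ plus $\rho^{-sp}\int_{I_{j+1}}\mint_{B_{j+1}}\tilde w_j^{p}$, times $\bigl(\operatorname*{ess\,sup}_{t\in I_{j+1}} \mint_{B_{j+1}} \tilde w_j^{2}\,dx\bigr)^{sp/N}$. Then I invoke the Caccioppoli inequality of Lemma \ref{lem-3-1} with $\delta=r$, which is legal since $p\leq p\kappa\leq 2\leq r$ and $q<p\kappa\leq 2\leq r$, so $r\geq\max\{p,q,2\}$. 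This dominates both the fractional energy and the supremum-in-time $L^{2}$ factor by the familiar quantity $(B/\rho^{sp})\int_{I_j}\mint_{B_j} w_j^{r}$ multiplied by a prefactor involving $A'_k$ and powers of $2^{j}$. The leftover $\rho^{-sp}\int \tilde w_j^{p}$ term is converted into an integral of $w_j^{r}$ through the elementary pointwise inequality \eqref{3.2} taken at $\tau=p$.

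\textbf{Collecting exponents.} Substituting and rearranging mirrors the computation leading to \eqref{3.12}: the three types of Caccioppoli contributions (encoding the $q$-, $sp$- and time-scales, respectively) combine into $A'_k$, the geometric cost from the cutoff functions combines into $B$, and the $j$-power sums to $b'=(1+sp/N)(N+m+r)$ after applying the Sobolev exponent $1+sp/N$ to the largest of the three inner scales. Multiplying back by $\|\tilde w_j\|_{L^\infty(Q_{j+1}^-)}^{\,r-p\kappa}$ then yields \eqref{3.13}. The only technical task is the exponent bookkeeping; no analytic ingredient beyond Lemmas \ref{lem-1-8}, \ref{lem-2-1} and \ref{lem-3-1} is required. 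The genuine cost of the subcritical case is therefore postponed to the subsequent iteration step, where the $L^\infty$ prefactor must be absorbed via interpolation between $L^{r}$ and $L^\infty$ in order to close the De Giorgi scheme.
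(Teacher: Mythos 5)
Your proposal matches the paper's own proof essentially line for line: factoring out $\|\tilde w_j\|_{L^\infty(Q_{j+1}^-)}^{r-p\kappa}$ to reduce from exponent $r$ to $p\kappa$ (which is \eqref{3.14}), then applying Lemma \ref{lem-1-8} and Lemma \ref{lem-3-1} with $\delta=r$ to bound the resulting $L^{p\kappa}$ integral (which is \eqref{3.15}), and collecting exponents exactly as in \eqref{3.12}. The justifications you give — that $p\kappa\leq 2<r$ makes the H\"older splitting of Lemma \ref{lem-3-2} unavailable, that $r>\max\{p,q,2\}$ makes $\delta=r$ an admissible choice in Lemma \ref{lem-3-1}, and that the $L^\infty$ factor is absorbed downstream in the iteration for Theorem \ref{thm-1-3} — are all accurate and align with the paper's reasoning.
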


\begin{proof}
	Thanks to the assumptions, we can see
	\begin{align}
	\label{3.14}
	\int_{I_{j+1}} \mint_{B_{j+1}} w_{j+1}^r(x,t)\,dxdt& \leq \int_{I_{j+1}} \mint_{B_{j+1}} \tilde{w}_j^r(x,t)\,dxdt\nonumber \\
	& \leq\left\|\tilde{w}_j\right\|_{L^\infty\left(Q_{j+1}^-\right)}^{r-p\kappa} \int_{I_{j+1}} \mint_{B_{j+1}} \tilde{w}_j^{p\kappa}(x,t)\,dxdt
	\end{align}
	with $\kappa=1+2s/N$. Similar to the proof of Lemma \ref{lem-3-2} with $\delta=r$, it gives that
	\begin{align}
	\label{3.15}
	&\quad\int_{I_{j+1}} \mint_{B_{j+1}} \tilde{w}_j^{p\kappa}(x,t)\,dxdt\nonumber\\
	&\leq C \rho^{sp}
	\left(\int_{I_{j+1}} \mint_{B_{j+1}} \mint_{B_{j+1}} \frac{|\tilde{w}_j(x,t)-\tilde{w}_j(y,t)|^p}{|x-y|^{N+sp}}
	\,dxdydt
	+\frac{1}{\rho^{sp}}\int_{I_{j+1}} \mint_{B_{j+1}} \tilde{w}_j^p(x,t)\,dxdt\right)\nonumber\\
	&\quad\times \left(\operatorname*{ess\,\sup}_{t\in I_{j+1}} \mint_{B_{j+1}} \tilde{w}_j^2(x,t)\,dx\right)^{\frac{sp}{N}}\nonumber\\
	&\leq C 2^{b'j}\rho^{sp}\left(\frac{A'_k B}{\rho^{sp}}\int_{I_j}\mint_{B_j} w_j^r\,dxdt\right)^{1+\frac{sp}{N}},
	\end{align}
	where $b', A'_k, B$ are defined as in the statement of claim. Thus, we get the conclusion by merging inequalities \eqref{3.14} and \eqref{3.15}.
\end{proof}

\begin{remark}
	Due to the fact that $\sigma\in[1/2,1)$, we can rewrite \eqref{3.13} as
	\begin{align*}
	&\quad\frac{1}{\rho^{sp}}\int_{I_{j+1}}\mint_{B_{j+1}}w_{j+1}^r(x,t)\,dxdt\nonumber\\
	&\leq C 2^{b'j}\left\|\tilde{w}_j\right\|_{L^\infty\left(Q_{j+1}^-\right)}^{r-p\kappa}\frac{1}{(1-\sigma)^\frac{(N+sp)(N+m)}{N}}\left(\frac{A'_k}{\rho^{sp}}\int_{I_j}\mint_{B_j} w_j^r\,dxdt\right)^{1+\frac{sp}{N}}.
	\end{align*}
\end{remark}
Now we are beginning to prove the boundedness results.

\begin{proof}[\rm{\textbf{Proof of Theorem \ref{thm-1-2}}}]
	Let $\rho=R$, $\sigma=\frac{1}{2}$, then $\rho_j=\frac{R}{2}+2^{-j-1}R$. For $\delta=\max\{2,p,q\}$, set
	\begin{align*}
	Y_j=\frac{1}{\rho^{sp}}\int_{I_j} \mint_{B_j}w_j^\delta\,dxdt, \quad j=0,1,2, \ldots.
	\end{align*}
	It follows from Lemma \ref{lem-3-2} that
	\begin{align}
	\label{3.16}
	\frac{1}{\rho^{sp}}\int_{I_{j+1}} \mint_{B_{j+1}} w_{j+1}^\delta(x,t)\,dxdt\leq C 2^{bj}\left(\frac{A_k}{\rho^{sp}}\int_{I_j}\mint_{B_j}w_j^\delta\,dxdt\right)^{1+\frac{s\delta}{N\kappa}}
	\end{align}
	with $b=\left(1+\frac{s p}{N}\right)\left(N+m+\delta\right), m=\max\{p,q\}$ and $\kappa:=1+2s/N$.
	If assuming $\tilde{k}\geq 1$ and taking $\tau=\min\{\delta-q,\delta-p,\delta-2\}$, it shows that
	\begin{align}
	\label{3.17}
	 A_k=\frac{1}{\tilde{k}^{\delta-p}}+\frac{\rho^{sp-q }}{\tilde{k}^{\delta-q}}+\frac{\rho^{s p}}{\theta \tilde{k}^{\delta-2}}\leq \frac{A}{\tilde{k}^\tau},
	\end{align}
	where
	\begin{align*}
	A=1+\rho^{sp-q}+\frac{\rho^{sp}}{\theta}.
	\end{align*}
	By using \eqref{3.16} and \eqref{3.17}, we have
	\begin{align*}
	Y_{j+1}\leq C 2^{bj}\left(\frac{A}{\tilde{k}^\tau}Y_j\right)^{1+\frac{s\delta}{N\kappa}}.
	\end{align*}
	Choose $\theta=\rho^{sp}$. Let $\tilde{k}$ be taken to satisfy
	\begin{align*}
	\tilde{k} \geq \max \bigg\{\mathrm{Tail}_\infty\left(u_+;x_{0},R/2,t_0-R^{sp}, t_0\right),
	C
	A^{\frac{1}{\tau}}\left(\mint^{t_0}_{t_0-R^{sp}} \mint_{B_R} u_+^\delta\,dxdt\right)^{\frac{s\delta}{\tau(N\kappa+s\delta)}}\vee 1\bigg\},
	\end{align*}
	where $C$ only depends on \textit{the data} and $\|a\|_{L^\infty(Q_T)}$. Applying Lemma \ref{lem-1-9}, we obtain $\lim_{j\rightarrow\infty}Y_j=0$, which implies that
	\begin{align*}
	\operatorname*{ess\,\sup}_{Q^-_{R/2,R^{sp}/2}}u \leq \mathrm{Tail}_\infty\left(u_+;x_0,R/2,t_0-R^{sp}, t_0\right)+CA^{\frac{1}{\tau}}\left(\mint^{t_0}_{t_0-R^{sp}} \mint_{B_R} u_+^\delta\,dxdt\right)^{\frac{s\delta}{\tau(N\kappa+s\delta)}}\vee 1,
	\end{align*}
	as intended.
\end{proof}

\begin{proof}[\rm{\textbf{Proof of Theorem \ref{thm-1-3}}}]
Let the conditions in Theorem \ref{thm-1-3} hold. By means of the appropriate approximation method, we may suppose that $u$ is qualitatively locally bounded based on \eqref{1.10} and \eqref{1.11}. In fact, since the approximation subsolutions $u_k$ are bounded, the following estimate still holds if we substitute $u$ by $u_k$, and combining with \eqref{1.10} and \eqref{1.11} gives a $k$-independent bound on $u_k$ in $L^\infty$. Therefore, we obtain $u$ is  qualitatively locally bounded by the a.e. convergence of $u_k$.

Let $\theta=R^{sp}$, $\theta_n=R_n^{sp}$, $R_0=R/2$ and $R_n=R/2+\sum_{i=1}^n 2^{-i-1}R$ for $n\in\mathbb{N}^+$, and $Q_n^-:=B_{R_n}(x_0) \times\left(t_0-\theta_n, t_0\right)=B_{R_n}(x_0) \times\left(t_0-R_n^{sp}, t_0\right)$.	
Denote
\begin{align*}
M_n=\operatorname{ess} \sup _{Q_n^-} u_+, \quad n=0,1,2,3, \ldots.
\end{align*}
Taking $\rho=R_{n+1}$ and $\sigma \rho=R_{n}$, then
\begin{align*}
\sigma=\frac{1 / 2+\sum_{i=1}^{n} 2^{-i-1}}{1 / 2+\sum_{i=1}^{n+1} 2^{-i-1}} \geq \frac{1}{2}.
\end{align*}	
Set
\begin{align*}
Y_j=\frac{1}{\rho^{sp}}\int_{I_j} \mint_{B_j}\left(u-k_j\right)_+^r\,dxdt, \quad j=0,1,2, \ldots.	
\end{align*}	
In light of Lemma \ref{lem-3-3}, we have
\begin{align*}
Y_{j+1}\leq& C 2^{b'j}\left(2+R^{sp-q}\right)^{1+\frac{sp}{N}}\left\|u_+\right\|_{L^\infty\left(Q_{n+1}^-\right)}^{r-p\kappa}
\frac{1}{(1-\sigma)^\frac{(N+sp)(N+m)}{N}}\\
&\times \left(\frac{1}{\tilde{k}^{r-2}}+\frac{1}{\tilde{k}^{r-p}}+\frac{1}{\tilde{k}^{r-q}}\right)^{1+\frac{sp}{N}}\left(\int_{I_j}\mint_{B_j} w_j^r\,dxdt\right)^{1+\frac{sp}{N}}\\
\leq& C2^{b'j+dn}A^{1+\frac{sp}{N}}M_{n+1}^{r-p\kappa}\left(\frac{1}{\tilde{k}^{r-2}}+\frac{1}{\tilde{k}^{r-p}}+\frac{1}{\tilde{k}^{r-q}}\right)^{1+\frac{sp}{N}} Y_{j}^{1+\frac{sp}{N}}
\end{align*}
with $b':=\left(1+\frac{sp}{N}\right)\left(N+m+r\right)$, $d=(N+sp)(N+m)/N$ and $A=2+R^{sp-q}$.
Then from Lemma \ref{lem-1-9} we get $\lim_{j\rightarrow\infty}Y_j=0$ if
\begin{align}
\label{3.18}
Y_0\leq C 2^{-\frac{dnN}{sp}-\frac{b'N^2}{(sp)^2}}A^{-\frac{N+sp}{sp}}
 M_{n+1}^{-\frac{N(r-p \kappa)}{sp}}\left(\frac{1}{\tilde{k}^{r-2}}+\frac{1}{\tilde{k}^{r-p}}+\frac{1}{\tilde{k}^{r-q}}\right)^{-\frac{N+sp}{sp}},
\end{align}
that is
\begin{align*}
&C 2^{\frac{dnN}{N+sp}}A M_{n+1}^{\frac{N(r-p\kappa)}{N+sp}}\left(\mint_{t_0-R_{n+1}^{sp}}^{t_0}\mint_{B_{R_{n+1}}}u_+^r\,dxdt\right)^{\frac{sp}{N+sp}}\\
\leq&\left(\frac{1}{\tilde{k}^{r-p}}+\frac{1}{\tilde{k}^{r-q}}+\frac{1}{\tilde{k}^{r-2}}\right)^{-1}.
\end{align*}
To ensure the inequality \eqref{3.18} holds true, we can take
\begin{align*}
\tilde{k}&=C 2^{\frac{dnN}{(N+sp)(r-p)}}A^{\frac{1}{r-p}}M_{n+1}^{\frac{N(r-p\kappa)}{(N+sp)(r-p)}}\left(\mint_{t_0-R_{n+1}^{sp}}^{t_0}\mint_{B_{R_{n+1}}}u_+^r\,dxdt\right)^{\frac{sp}{(N+sp)(r-p)}}\\
&\quad+C 2^{\frac{dnN}{(N+sp)(r-q)}}A^{\frac{1}{r-q}}M_{n+1}^{\frac{N(r-p\kappa)}{(N+sp)(r-q)}}\left(\mint_{t_0-R_{n+1}^{sp}}^{t_0}\mint_{B_{R_{n+1}}}u_+^r\,dxdt\right)^{\frac{sp}{(N+sp)(r-q)}}\\
&\quad+C 2^{\frac{dnN}{(N+sp)(r-2)}}A^{\frac{1}{r-2}}M_{n+1}^{\frac{N(r-p\kappa)}{(N+sp)(r-2)}}\left(\mint_{t_0-R_{n+1}^{sp}}^{t_0}\mint_{B_{R_{n+1}}}u_+^r\,dxdt\right)^{\frac{sp}{(N+sp)(r-2)}}\\
&\quad+\frac{\mathrm{Tail}_\infty\left(u_+;x_0,R_n,t_0-R_{n+1}^{sp}, t_0\right)}{2},
\end{align*}
where constants $C$ depend on \textit{the data}, $r$ and $\|a\|_{L^\infty(Q_T)}$. Thus by Lemma \ref{lem-1-9} we obtain
\begin{align}
\label{3.19}
M_n&=\operatorname*{ess\sup}_{Q_{R_n}^-} u^+\nonumber\\
&\leq C 2^{\frac{dnN}{(N+sp)(r-p)}}A^{\frac{1}{r-p}}M_{n+1}^{\frac{N(r-p\kappa)}{(N+sp)(r-p)}}\left(\mint_{t_0-R_{n+1}^{sp}}^{t_0}\mint_{B_{R_{n+1}}}u_+^r\,dxdt\right)^{\frac{sp}{(N+sp)(r-p)}}\nonumber\\
&\quad+C 2^{\frac{dnN}{(N+sp)(r-q)}}A^{\frac{1}{r-q}}M_{n+1}^{\frac{N(r-p\kappa)}{(N+sp)(r-q)}}\left(\mint_{t_0-R_{n+1}^{sp}}^{t_0}\mint_{B_{R_{n+1}}}u_+^r\,dxdt\right)^{\frac{sp}{(N+sp)(r-q)}}\nonumber\\
&\quad+C 2^{\frac{dnN}{(N+sp)(r-2)}}A^{\frac{1}{r-2}}M_{n+1}^{\frac{N(r-p\kappa)}{(N+sp)(r-2)}}\left(\mint_{t_0-R_{n+1}^{sp}}^{t_0}\mint_{B_{R_{n+1}}}u_+^r\,dxdt\right)^{\frac{sp}{(N+sp)(r-2)}}\nonumber\\
&\quad+\frac{\mathrm{Tail}_\infty\left(u_+;x_0,R_n,t_0-R_{n+1}^{sp}, t_0\right)}{2}.
\end{align}
In view of the assumption $r>\max\left\{2,\frac{N(2-p)}{sp}\right\}$, which indicates that
\begin{align*}
0<\frac{N(r-p\kappa)}{(N+sp)(r-p)},\frac{N(r-p\kappa)}{(N+sp)(r-q)},\frac{N(r-p\kappa)}{(N+sp)(r-2)}<1.
\end{align*}
Applying Young's inequality to \eqref{3.19}, we get
\begin{align*}
M_{n} &\leq  \varepsilon M_{n+1}+C 2^{\frac{dnN}{(N+sp)(r-p-\beta)}}\varepsilon^{-\frac{\beta}{r-p-\beta}}A^{\frac{1}{r-p-\beta}} \left(\mint_{t_0-R^{sp}}^{t_0}\mint_{B_R}u_+^r\,dxdt\right)^{\frac{sp}{(N+sp)(r-p-\beta)}}\\
&\quad+C 2^{\frac{dnN}{(N+sp)(r-q-\beta)}}\varepsilon^{-\frac{\beta}{r-q-\beta}}A^{\frac{1}{r-q-\beta}}\left(\mint_{t_0-R^{sp}}^{t_0}\mint_{B_R}u_+^r\,dxdt\right)^{\frac{sp}{(N+sp)(r-q-\beta)}}\\
&\quad+C 2^{\frac{dnN}{(N+sp)(r-2-\beta)}}\varepsilon^{-\frac{\beta}{r-2-\beta}}A^{\frac{1}{r-2-\beta}}\left(\mint_{t_0-R^{sp}}^{t_0}\mint_{B_R}u_+^r\,dxdt\right)^{\frac{sp}{(N+sp)(r-2-\beta)}}\\
&\quad+\frac{\mathrm{Tail}_\infty\left(u_+;x_0,R/2,t_0-R^{sp},t_0\right)}{2}
\end{align*}
with $\beta=(r-p\kappa)N/(sp+N)$. By the induction argument, we deduce that
\begin{align}
\label{3.20}
M_0&\leq \varepsilon^{n+1}M_{n+1}\nonumber\\
&\quad+C\varepsilon^{-\frac{\beta}{r-p-\beta}}A^{\frac{1}{r-p-\beta}}\left(\mint_{t_0-R^{sp}}^{t_0}\mint_{B_R}u_+^r\,dxdt\right)^{\frac{sp}{(N+sp)(r-p-\beta)}}\sum_{i=0}^n \left(2^{{\frac{dN}{(N+sp)(r-p-\beta)}}}\varepsilon\right)^i\nonumber\\
&\quad+C\varepsilon^{-\frac{\beta}{r-q-\beta}}A^{\frac{1}{r-q-\beta}}\left(\mint_{t_0-R^{sp}}^{t_0}\mint_{B_R}u_+^r\,dxdt\right)^{\frac{sp}{(N+sp)(r-q-\beta)}}\sum_{i=0}^n \left(2^{{\frac{dN}{(N+sp)(r-q-\beta)}}}\varepsilon\right)^i\nonumber\\
&\quad+C\varepsilon^{-\frac{\beta}{r-2-\beta}}A^{\frac{1}{r-2-\beta}}\left(\mint_{t_0-R^{sp}}^{t_0}\mint_{B_R}u_+^r\,dxdt\right)^{\frac{sp}{(N+sp)(r-2-\beta)}}\sum_{i=0}^n \left(2^{{\frac{dN}{(N+sp)(r-2-\beta)}}}\varepsilon\right)^i\nonumber\\
&\quad+\frac{\mathrm{Tail}_\infty\left(u_+;x_0,R/2,t_0-R^{sp},t_0\right)}{2}\sum_{i=0}^n\varepsilon^i, \quad n=0,1,2,\ldots.
\end{align}
Observe that the sum on the right-hand side of \eqref{3.20} can be revised by a convergent series if we choose
\begin{align*}
\varepsilon=2^{-\left[{\frac{dN}{(N+sp)(r-2-\beta)}}+1\right]}.
\end{align*}	

To complete the proof, we let $n\rightarrow\infty$ and arrive at
\begin{align*}
\operatorname*{ess\sup}_{Q_{R/2,R^{sp}/2}^-} u&\leq \mathrm{Tail}_\infty\left(u_+;x_0,R/2,t_0-R^{sp}, t_0\right)\\
&\quad+C\left(R^{sp-q}+2\right)^{\frac{1}{r-2-\beta}}\left(\mint_{t_0-R^{sp}}^{t_0}\mint_{B_R} u_+^r\,dx dt\right)^{\frac{sp}{(N+sp)(r-2-\beta)}} \\
&\quad\quad\vee \left(R^{sp-q}+2\right)^{\frac{1}{r-p-\beta}}\left(\mint_{t_0-R^{sp}}^{t_0}\mint_{B_R} u_+^r\,dx dt\right)^{\frac{sp}{(N+sp)(r-p-\beta)}}\\
&\quad\quad \vee\left(R^{sp-q}+2\right)^{\frac{1}{r-q-\beta}} \left(\mint_{t_0-R^{sp}}^{t_0}\mint_{B_R} u_+^r\,dx dt\right)^{\frac{sp}{(N+sp)(r-q-\beta)}},
\end{align*}
where $C>0$ depends only on \textit{the data}, $r$ and $\|a\|_{L^\infty(Q_T)}$.
\end{proof}

\section{Lower semicontinuity and pointwise behavior}
\label{sec4}

In this section, we study the lower semicontinuity and pointwise behavior of weak supersolutions to \eqref{1.1}. We begin with introducing the measure theoretic property of weak solutions. Then we establish the De Giorgi type lemma \ref{lem-4-2}, which is a key ingredient to obtain the Theorem \ref{thm-1-4}.

What follows is the property $(\mathcal{D})$ of weak solutions that can be found in \cite{L21}. Assume that $u$ is a measurable function, locally bounded from below in $Q_T$. Let 
the parameters $0<b,c<1, L>0$ and the cylinders $Q_{\rho,\theta}(x_0,t_0)\subset Q_T$. Let the number $\mu^-$ satisfy
\begin{align*}
\mu^-\leq\operatorname*{ess\inf}_{Q_{\rho,\theta}(x_0,t_0)} u.
\end{align*}
We mean the function $u$ satisfies the property $(\mathcal{D})$ if there exists a constant $\nu\in(0,1)$ only depending on $b, L, \mu^-$ and other data, but independent of $\rho$ such that if
\begin{align*}
|[u\leq\mu^-+L]\cap Q_{\rho,\theta}(x_0,t_0)|\leq \nu| Q_{\rho,\theta}(x_0,t_0)|,
\end{align*}
then
\begin{align*}
u\geq\mu^-+bL \text{ \ \ a.e.\ in\ } Q_{c\rho,c\theta}(x_0,t_0).
\end{align*}

Similar to the Theorem 2.1 in \cite{L21}, we present the following result.

\begin{theorem}
	\label{thm-4-1}
	Let $u$ be a locally essentially bounded below measurable function in $Q_T$ that satisfies the property $(\mathcal{D})$, then we have $u_*(x,t)=u(x,t)$ for every $(x,t)\in\mathcal{F}$. In particular, $u_*$ is a lower semiconutinuous representative of $u$ in $Q_T$.
\end{theorem}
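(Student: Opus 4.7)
The plan is to verify $u_*(x,t)=u(x,t)$ pointwise on $\mathcal{F}$ by splitting into the two inequalities, and then derive the lower semicontinuity of $u_*$ directly from its definition.

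The easy direction $u_*(x,t)\le u(x,t)$ on $\mathcal{F}$ follows from Lebesgue differentiation: for every cylinder $Q_{\rho,\theta}(x,t)\subset Q_T$ one has
\begin{align*}
\operatorname*{ess\inf}_{Q_{\rho,\theta}(x,t)} u \le \mint_{Q_{\rho,\theta}(x,t)} u(y,\hat t)\,dy\,d\hat t,
\end{align*}
and the defining property of $\mathcal{F}$ forces the right-hand side to converge to $u(x,t)$ as $\rho\to 0$. Passing to the limit on the left gives $u_*(x,t)\le u(x,t)$.

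The reverse inequality is the heart of the argument, and I would prove it by contradiction. Assume $u_*(x_0,t_0)<u(x_0,t_0)$ at some $(x_0,t_0)\in\mathcal{F}$ and pick $\lambda\in(u_*(x_0,t_0),u(x_0,t_0))$. Since $u_*(x_0,t_0)\ge\mu^-$, fix $b\in(0,1)$ so close to $1$ that $\mu^-+b(\lambda-\mu^-)>u_*(x_0,t_0)$. With this $b$ and $L:=\lambda-\mu^-$, property $(\mathcal{D})$ furnishes constants $\nu=\nu(b,L,\mu^-,\dots)\in(0,1)$ and $c\in(0,1)$. On the other hand, $(x_0,t_0)\in\mathcal{F}$ combined with Chebyshev's inequality applied to $|u-u(x_0,t_0)|$ and the inclusion $\{u\le\lambda\}\subset\{|u-u(x_0,t_0)|\ge u(x_0,t_0)-\lambda\}$ yields
\begin{align*}
\lim_{\rho\to 0}\frac{|\{u\le\mu^-+L\}\cap Q_{\rho,\theta}(x_0,t_0)|}{|Q_{\rho,\theta}(x_0,t_0)|}=0.
\end{align*}
Hence for all sufficiently small $\rho$ the measure condition of $(\mathcal{D})$ is satisfied, and consequently $u\ge\mu^-+bL$ a.e.\ in $Q_{c\rho,c\theta}(x_0,t_0)$. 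Taking the essential infimum there and letting $\rho\to 0$ gives $u_*(x_0,t_0)\ge\mu^-+bL>u_*(x_0,t_0)$, a contradiction.

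For the lower semicontinuity of $u_*$: given any $(x_n,t_n)\to(x,t)$ in $Q_T$ and any small $\rho>0$, the inclusion $Q_{\rho/2,\theta/2}(x_n,t_n)\subset Q_{\rho,\theta}(x,t)$ holds for all $n$ large, so $u_*(x_n,t_n)\ge\operatorname*{ess\inf}_{Q_{\rho,\theta}(x,t)}u$. Taking $\liminf_n$ and then $\rho\to 0$ gives $\liminf_n u_*(x_n,t_n)\ge u_*(x,t)$. Combined with $|\mathcal{F}|=|Q_T|$, this shows that $u_*$ is a lower semicontinuous representative of $u$. The main obstacle is the middle step: the threshold $\nu$ supplied by property $(\mathcal{D})$ depends on $b$, so one must fix $b$ close to $1$ (and hence $\nu$ small) \emph{before} shrinking $\rho$; this ordering of the parameter choices is exactly what allows the vanishing density of $\{u\le\lambda\}$ at $(x_0,t_0)$ to be played off against $(\mathcal{D})$.
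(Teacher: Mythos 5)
Your argument is correct and is essentially the same proof the paper is invoking by its reference to Liao's Theorem 2.1: the inequality $u_*\le u$ on $\mathcal{F}$ via Lebesgue differentiation, the reverse inequality by contradiction through Chebyshev's inequality and property $(\mathcal{D})$ with $L=\lambda-\mu^-$ and $b$ fixed close to $1$ before shrinking $\rho$, and lower semicontinuity of $u_*$ straight from its definition as a monotone limit of essential infima. The ordering-of-quantifiers remark at the end is exactly the point that makes the contradiction work, and you state it correctly.
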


Next, we devote to verifying the weak solutions of \eqref{1.1} fulfill the property $(\mathcal{D})$, which together with Theorem \ref{thm-4-1} leads to the conclusion of Theorem \ref{thm-1-4}.  
\begin{lemma}[De Giorgi type lemma]
\label{lem-4-2}
Let the assumptions \eqref{1.2} and \eqref{1.12} hold. Set $M:=\sup_{Q_T} |u|$ and fix the radius $\rho\in(0,1]$, the constants $L\in(0,M)$, $b\in(0,1]$. Assume that $u$ is a local weak supersolution to \eqref{1.1}. For a positive parameter $\eta$, we set the time levels
\begin{align}
\label{4.1}
\theta=\eta\rho^q \text {\ \ \ when   } \operatorname*{\max}_{Q_{\rho,\eta\rho^{sp}}(x_0,t_0)} a(x,t) \geq 2[a]_{\alpha,\frac{\alpha}{2}} \rho^\alpha,\nonumber\\
\theta=\eta\rho^{sp} \text {\ \ \ when   } \operatorname*{\max}_{Q_{\rho,\eta\rho^{sp}}(x_0,t_0)} a(x,t) \leq 2[a]_{\alpha,\frac{\alpha}{2}} \rho^\alpha.
\end{align}
Let $Q_{\rho,\theta}^-(x_0,t_0)\subset Q_T$, and the number $\mu^-$ satisfy
\begin{align}
\label{4.2}
\mu^-\leq \operatorname*{ess\inf}_{\mathbb{R}^N\times(0,T)} u.
\end{align}
There exists a constant $\nu\in(0,1)$ depending on the data, $M,L,b,\eta,\mu^-$ and $[a]_{\alpha,\frac{\alpha}{2}}$ such that if
\begin{align*}
|[u\leq \mu^-+L]\cap Q^-_{\rho,\theta}(x_0,t_0)|\leq \nu|Q^-_{\rho,\theta}(x_0,t_0)|,
\end{align*}
then
\begin{align*}
u\geq \mu^-+bL \text{\ a.e. in }  Q_{\frac{\rho}{2},\frac{\theta}{2}}(x_0,t_0).
\end{align*}
\end{lemma}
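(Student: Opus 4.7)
The plan is to run a standard De Giorgi iteration on the truncated supersolution, with a case split according to which phase dominates. Introduce shrinking radii $\rho_n = \rho/2 + \rho/2^{n+1}$ and times $\theta_n = \theta/2 + \theta/2^{n+1}$, the nested cylinders $Q_n := B_{\rho_n}(x_0) \times (t_0 - \theta_n, t_0)$ together with intermediate cylinders $\widetilde{Q}_n$ interpolating between $Q_n$ and $Q_{n+1}$, and the decreasing levels $k_n := \mu^- + bL + (1-b)L\,2^{-n}$, so that $k_0 = \mu^- + L$, $k_n \downarrow \mu^- + bL$, and $k_n - k_{n+1} = (1-b)L\,2^{-n-1}$. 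Work with the truncation $w_n := (k_n - u)_+$, the supersolution analogue of the $w_+$ in Lemma \ref{lem-2-1}. The global lower bound $u \ge \mu^-$ on $\mathbb{R}^N\times(0,T)$ yields $0 \le w_n \le k_0 - \mu^- = L$ on the whole of $\mathbb{R}^N\times(0,T)$, which is what allows us to control the nonlocal tail.

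First I would invoke the supersolution version of Lemma \ref{lem-2-1} (obtained by the same argument testing with $-w_n \psi_n^m$ in the weak formulation for supersolutions) using a cutoff $\psi_n \in C^\infty_0(\widetilde{Q}_n)$ with $\psi_n \equiv 1$ on $Q_{n+1}$, $|\nabla \psi_n| \le C\,2^n/\rho$, and $|\partial_t \psi_n^m| \le C\,2^n/\theta$. The nonlocal tail term on the right-hand side of the Caccioppoli inequality is bounded by $C L^{p-1}\rho^{-sp}\iint w_n \psi_n^m\,dx\,dt$ thanks to $w_n \le L$ on the whole space. Combining the resulting energy bound with the Sobolev inequality of Lemma \ref{lem-1-8}, I obtain an estimate of the shape
\begin{align*}
\iint_{Q_{n+1}} w_n^{p\kappa}\,dx\,dt \le C\,B^n\, H\,\Big(\iint_{Q_n} w_n^p\,dx\,dt\Big)^{1+\frac{sp}{N}},
\end{align*}
where $B>1$ and the constant $H$ both depend on the phase regime through $a$, $\rho$, $\theta$ and $L$.

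The next step is the case split dictated by \eqref{4.1}. In the local-dominated regime $\max_{Q_{\rho,\eta\rho^{sp}}} a \ge 2[a]_{\alpha,\alpha/2}\rho^\alpha$, the H\"older continuity of $a$ forces $a(x,t) \ge c\,[a]_{\alpha,\alpha/2}\rho^\alpha$ throughout $Q^-_{\rho,\theta}$; with the intrinsic time $\theta = \eta\rho^q$ and the structural condition $q \le sp+\alpha$, the time, local-gradient, and nonlocal-gradient contributions in the Caccioppoli inequality all balance at the same scale. In the nonlocal-dominated regime $\max a \le 2[a]_{\alpha,\alpha/2}\rho^\alpha$, the bound $a(x,t) \le C\rho^\alpha$ together with $w_n \le L$ allows the local $q$-Laplacian term to be absorbed into the nonlocal one at the standard fractional scaling $\theta = \eta\rho^{sp}$. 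In either case, Chebyshev with $w_n \ge (1-b)L\,2^{-n-1}$ on $\{u<k_{n+1}\}$ together with H\"older's inequality between $L^p$ and $L^{p\kappa}$ produces a recursive estimate for
\begin{align*}
Y_n := \frac{|\{u < k_n\} \cap Q_n|}{|Q_n|}
\end{align*}
of the form $Y_{n+1} \le K\,b_0^{\,n}\,Y_n^{1+\sigma}$, with $K$, $b_0>1$, and $\sigma>0$ depending only on the data, $M$, $L$, $b$, $\eta$, $\mu^-$, and $[a]_{\alpha,\alpha/2}$.

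Applying Lemma \ref{lem-1-9} with $\nu := K^{-1/\sigma} b_0^{-1/\sigma^2}$, the smallness hypothesis $Y_0 \le \nu$ yields $Y_n \to 0$, which is precisely $u \ge \mu^- + bL$ a.e.\ on $Q_{\rho/2,\theta/2}(x_0,t_0)$. The main obstacle is keeping the iteration constants $K$ and $b_0$ uniform in $\rho$ across the two phase regimes: in the local-dominated case this rests on the pointwise lower bound $a \ge c\rho^\alpha$, the intrinsic time $\theta = \eta\rho^q$, and the condition $q \le sp+\alpha$ so that the nonlocal gradient contribution does not destabilise the recursion, while in the nonlocal-dominated case it exploits the pointwise upper bound $a \le C\rho^\alpha$ and the a priori bound $w_n \le L$ to render the local $q$-Laplacian contribution subordinate at the fractional scaling $\theta = \eta\rho^{sp}$.
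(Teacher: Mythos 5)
Your overall architecture is the right one, and in the nonlocal-dominated regime your sketch matches the paper's proof: iterate Lemma~\ref{lem-2-1} on a supersolution with a shrinking family of cylinders, use the global bound $u\ge\mu^-$ so that $w_n\le L$ on all of $\mathbb{R}^N\times(0,T)$ to control the tail, feed the energy estimate into Lemma~\ref{lem-1-8}, Chebyshev, and close with Lemma~\ref{lem-1-9}. But there is a genuine gap in the local-dominated case.

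You state that you apply the fractional Sobolev inequality of Lemma~\ref{lem-1-8} to get the recursive bound and only \emph{then} split cases. In the regime $\bar a_\rho\ge 2[a]_{\alpha,\alpha/2}\rho^\alpha$, where $\theta=\eta\rho^q$, this cannot produce a $\rho$-free iteration constant. Lemma~\ref{lem-1-8} gains a factor $\rho^{sp}$ in front of the fractional seminorm, while the Caccioppoli time-derivative contribution scales as $L^2\,2^{spj}/\theta$. The resulting product carries a factor
\begin{align*}
\frac{\rho^{sp}}{\theta}=\frac{\rho^{sp-q}}{\eta},
\end{align*}
and since \eqref{1.2} imposes $sp<q$, this diverges as $\rho\to 0$. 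The hypothesis $q\le sp+\alpha$ only neutralizes the factor $\bar a_\rho\,\rho^{sp-q}\lesssim \rho^{sp-q+\alpha}$ coming from the local gradient term; it does nothing for the time term. So the constant $K$ in your recursion $Y_{n+1}\le K b_0^n Y_n^{1+\sigma}$ is not independent of $\rho$, and the resulting $\nu$ would degenerate.

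The paper avoids this by exploiting, in the local-dominated case, that $\tfrac23\bar a_\rho\le a\le 2\bar a_\rho$ on $Q_{\rho,\eta\rho^q}$, so the $q$-Laplacian energy on the left of \eqref{2.1} is uniformly coercive at scale $\bar a_\rho\sim\rho^\alpha$; it then invokes a \emph{local} parabolic $q$-Sobolev inequality (Lemma~2.1 of \cite{SZ23}) which gains $\rho^q$ rather than $\rho^{sp}$, so that $\rho^q/\theta=1/\eta$ is $\rho$-independent, while the nonlocal contributions now pick up $\rho^{q-sp}\le1$ and the local-gradient contribution a factor $\rho^{q-q+\alpha-\alpha}=1$ after normalizing by $\bar a_\rho$. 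To repair your argument you need to replace Lemma~\ref{lem-1-8} by such a local Sobolev embedding, driven by the $q$-gradient energy, in the regime $\bar a_\rho\ge 2[a]_{\alpha,\alpha/2}\rho^\alpha$; the nonlocal case as you wrote it is fine.
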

\begin{proof}
We may assume $(x_0,t_0)=(0,0)$ without loss of generality. Let
\begin{gather*}
k_j=\mu^-+bL+\frac{(1-b)L}{2^j}, \ w_j=(k_j-u)_+,\nonumber\\
\rho_j=\rho+\frac{\rho}{2^j},\ \tilde{\rho}_j=\frac{\rho_j+\rho_{j+1}}{2},\  \hat{\rho}_j=\frac{3\rho_j+\rho_{j+1}}{4},\nonumber\\
\theta_j=\theta+\frac{\theta}{2^j},\ \tilde{\theta}=\frac{\theta_j+\theta_{j+1}}{2},\  \hat{\theta_j}=\frac{3\theta_j+\theta_{j+1}}{4},\nonumber\\
B_j=B_{\rho_j},\ \tilde{B}_{j}=\tilde{B}_{\rho_j},\ \hat{B}_{j}=\hat{B}_{r_j},\nonumber\\
Q_j=B_j\times(-\theta_j,0),\ \tilde{Q}_j=\tilde{B}_j\times(-\tilde{\theta}_j,0),\ \hat{Q}_j=\hat{B}_j\times(-\hat{\theta}_j,0).
\end{gather*}
Obviously, there holds
\begin{align*}
Q_{j+1}\subset\tilde{Q}_j\subset\hat{Q}_j\subset Q_j.
\end{align*}
Choose the cutoff functions $\zeta\in C_0^\infty(Q_j)$ vanishing outside $\hat{Q}_j$, and satisfy
\begin{align*}
\zeta=1 \text{\ in } \tilde{Q}_j,\  |\nabla\zeta|\leq\frac{2^{j}}{\rho}, \quad|\partial_t\zeta|\leq \frac{2^{spj}}{\theta}.
\end{align*}

Let
$$\bar{a}_{\rho}:=\operatorname*{\max}_{Q_{\rho,\eta\rho^{sp}}(x_0,t_0)} a(x,t).$$
 We distinguish two cases: $\bar{a}_{\rho}\leq 2[a]_{\alpha,\frac{\alpha}{2}}\rho^\alpha$ and $\bar{a}_{\rho}\geq 2[a]_{\alpha,\frac{\alpha}{2}}\rho^\alpha$. For the first case, we utilize the Caccioppoli type inequality \eqref{2.1} in $Q_j$ along with the property $\zeta(x,t)=1$ in $\tilde{Q}_j$, which gives that
\begin{align}
\label{4.3}
& \operatorname*{ess\sup}_{-\tilde{\theta}_j<t<0}\int_{\tilde{B}_j} w_j^2(x,t) \,dx+\int_{-\tilde{\theta}_j}^0 \int_{\tilde{B}_j} \int_{\tilde{B}_j} \frac{\left|w_j(x,t)-w_j(y,t)\right|^p}{|x-y|^{N+sp}}\,dxdydt\nonumber\\
\leq &C \int_{-\theta_j}^{0} \int_{B_j} w_j^2(x,t)|\partial_t \zeta^m|\,dxdt+ C \int_{-\theta_j}^{0} \int_{B_j}a(x,t)  w_j^q(x,t)|\nabla \zeta(x,t)|^q\,dxdt\nonumber\\
&+ C \int_{-\theta_j}^{0}\int_{B_j}\int_{B_j}(\max\{w_j(x,t), w_j(y,t)\})^p\left|\zeta^{\frac{m}{p}}(x,t)-\zeta^{\frac{m}{p}}(y,t)\right|^p \, d\mu dt\nonumber\\
&+C \mathop{\mathrm{ess}\,\sup}_{\stackrel{-\theta_j<t<0}{x \in \hat{B}_j}}  \int_{\mathbb{R}^N \backslash B_j} \frac{w_j^{p-1}(y,t)}{|x-y|^{N+sp}}\,dy \int_{-\theta_j}^{0} \int_{B_j} w_j(x,t) \zeta^m(x,t) \, dxdt,
\end{align}
where $m=\max\{p,q\}$. Denote $A_j:=[u<k_j]\cap Q_j$, we estimate
\begin{align}
\label{4.4}
\int_{-\theta_j}^{0} \int_{B_j} w_j^2(x,t)|\partial_t \zeta^m|\,dxdt\leq C \frac{2^{spj}}{\theta}L^2|A_j|.
\end{align}
By the condition \eqref{1.12}, it shows that
\begin{align}
\label{4.5}
\int_{-\theta_j}^{0} \int_{B_j}a(x,t)  w_j^q(x,t)|\nabla \zeta(x,t)|^q\,dxdt&\leq C \frac{2^{jq}}{\rho^q}\bar{a}_\rho L^q|A_j|\nonumber\\
&\leq C 2^{jq}\rho^{\alpha-q} M^{q-p}L^p|A_j|\nonumber\\
&\leq C\frac{2^{jq}}{\rho^{sp}}L^p|A_j|,
\end{align}
where $C$ depends on \textit{the data}, $[a]_{\alpha,\frac{\alpha}{2}}$ and $M$.
Moreover, we have
\begin{align}
\label{4.6}
\int_{-\theta_j}^{0}\int_{B_j}\int_{B_j}(\max\{w_j(x,t), w_j(y,t)\})^p\left|\zeta^{\frac{m}{p}}(x,t)-\zeta^{\frac{m}{p}}(y,t)\right|^p \, d\mu dt\leq C\frac{2^{jm}}{\rho^{sp}}L^p|A_j|.
\end{align}
For $|y|\geq \rho_j$ and $|x|\leq \hat{\rho}_j$, we can get
\begin{align*}
\frac{|y-x|}{|y|} \geq 1-\frac{\hat{\rho}_j}{\rho_j}=\frac{1}{4}\left(\frac{\rho_j-\rho_{j+1}}{\rho_j}\right) \geq \frac{1}{2^{j+4}},
\end{align*}
which along with \eqref{4.2} gives that
\begin{align}
\label{4.7}
\mathop{\mathrm{ess}\sup}_{\stackrel{-\theta_j<t<0}{x \in \hat{B}_j}}  \int_{\mathbb{R}^N \backslash B_j} \frac{w_j^{p-1}(y,t)}{|x-y|^{N+sp}}\,dy \int_{-\theta_j}^{0} \int_{B_j} w_j(x,t) \zeta^m(x,t) \, dxdt\leq C\frac{2^{j(N+sp)}}{\rho^{sp}}L^p|A_j|.
\end{align}

We deduce from \eqref{4.3}--\eqref{4.7} that
\begin{align*}
&\operatorname*{ess\sup}_{-\tilde{\theta}_j<t<0}\int_{\tilde{B}_j} w_j^2(x,t) \,dx+\int_{-\tilde{\theta}_j}^0 \int_{\tilde{B}_j} \int_{\tilde{B}_j} \frac{\left|w_j(x,t)-w_j(y,t)\right|^p}{|x-y|^{N+sp}}\,dxdydt\\
\leq &C 2^{j(N+2m)}\left(\frac{L^2}{\theta}+\frac{L^p}{\rho^{sp}}\right)|A_j|,
\end{align*}
where $C$ depends on \textit{the data}, $[a]_{\alpha,\frac{\alpha}{2}}$, and $M$.
Employing the Sobolev embedding Lemma \ref{lem-1-8} and recalling the fact $\zeta=1$ in $\tilde{Q}_j$ yields that
\begin{align}
\label{4.8}
&\iint_{\tilde{Q}_j}(w_j\zeta)^{p(1+\frac{2s}{N})}\,dxdt=\iint_{\tilde{Q}_j}w_j^{p(1+\frac{2s}{N})}\,dxdt\nonumber\\
\leq&\int_{-\tilde{\theta}_j}^0 \int_{\tilde{B}_j} \int_{\tilde{B}_j} \frac{\left|w_j(x,t)-w_j(y,t)\right|^p}{|x-y|^{N+sp}}\,dxdydt\times\left(\operatorname*{ess\sup}_{-\tilde{\theta}_j<t<0}\int_{\tilde{B}_j} w_j^2(x,t) \,dx\right)^{\frac{sp}{N}}\nonumber \\
\leq& C\left[2^{j(N+2m)}\left(\frac{L^2}{\theta}+\frac{L^p}{\rho^{sp}}\right)\right]^{\frac{N+sp}{N}}|A_j|^{\frac{N+sp}{N}}.
\end{align}
On the other hand,
\begin{align}
\label{4.9}
\iint_{\tilde{Q}_j}(w_j\zeta)^{p(1+\frac{2s}{N})}\,dxdt&\geq \iint_{\tilde{Q}_j}w_j^{p(1+\frac{2s}{N})}\chi_{[u<k_{j+1}]}\,dxdt\nonumber\\
&\geq \iint_{Q_{j+1}\cap[u<k_{j+1}]}(k_j-k_{j+1})^{p(1+\frac{2s}{N})}\,dxdt\nonumber\\
&\geq\left[\frac{(1-b)L}{2^{j+1}}\right]^{p\frac{N+2s}{N}}|Q_{j+1}\cap[u<k_{j+1}]|.
\end{align}
Set
\begin{align*}
Y_j=\frac{|[u<k_j]\cap Q_j|}{|Q_j|},
\end{align*}
then it follows from \eqref{4.8} and \eqref{4.9} that
\begin{align*}
Y_{j+1}\leq \frac{C h^j}{(1-b)^{p\frac{N+2s}{N}}}\left(\frac{\theta}{L^{2-p}}\right)^{\frac{sp}{N}}\left(\frac{1}{\rho^{sp}}+\frac{L^{2-p}}{\theta}\right)^{\frac{N+sp}{N}}\rho^{sp}|Y_j|^{\frac{N+sp}{N}},
\end{align*}
where $h=h(N,s,p,q)>1$. Since $\theta=\eta\rho^{sp}$, we can obtain $Y_j\rightarrow 0$ as $j\rightarrow\infty$ by Lemma \ref{lem-1-9} provided
\begin{align*}
Y_0\leq C^{-\frac{N}{sp}}h^{-\left(\frac{N}{sp}\right)^2}(1-b)^{N+2} \frac{L^{2-p}}{\eta}\left(1+\frac{L^{2-p}}{\eta}\right)^{-\frac{N+sp}{sp}}:=\nu.
\end{align*}
For the case $\bar{a}_{\rho}\geq 2[a]_{\alpha,\frac{\alpha}{2}}\rho^\alpha$, we can check that $\frac{2}{3}\bar{a}_\rho\leq a(x,t)\leq 2\bar{a}_\rho$ in $Q_{\rho,\eta\rho^q}(x_0,t_0)$ which leads to the nonstandard growth problem. In this case, we can use the method of proving Lemma 3.2 in our previous paper \cite{SZ23} to get this result. Similar to the proof above, employ Caccioppoli inequality \eqref{2.1} in $Q_j$ and Lemma 2.1 in \cite{SZ23}, we can arrive at the conclusion by choosing
\begin{align*}
\nu=C^{-\frac{N}{q}}\tilde{h}^{-\left(\frac{N}{q}\right)^2}(1-b)^{N+2} \frac{L^{2-q}}{\eta}\left(1+\frac{L^{2-q}}{\eta}+L^{p-q}\right)^{-\frac{N+q}{q}},
\end{align*}
where $\tilde{h}=\tilde{h}(N,s,p,q)>1$.
\end{proof}

We now prepare to prove Theorem \ref{thm-1-6}. From the definition of $u_*$, we can obtain
\begin{align}
\label{4.10}
u_*(x,t)=\operatorname*{ess\lim\inf}_{(y,\hat{t}) \rightarrow(x,t)}u(y,\hat{t})\leq\inf_{\eta>0} \lim _{\varrho \rightarrow 0}\operatorname*{ess\inf}_{Q'_{\rho,\theta}(x,t)} u \text{\ \ for every }(x,t)\in Q_T.
\end{align}
For the reverse inequality of \eqref{4.10}, we need prove a De Giorgi lemma over the forward cylinders with initial data as below.

\begin{lemma}
	\label{lem-4-3}
Let the assumptions \eqref{1.2} and \eqref{1.12} hold. Set $M:=\sup_{Q_T} |u|$ and fix the radius $\rho\in(0,1]$, the constants $L\in(0,M)$, $b\in(0,1]$. Assume that $u$ is a local weak supersolution to \eqref{1.1}. Set the same time levels as \eqref{4.1}.
Let the number $\mu^-$ satisfy
\begin{align*}
\mu^-\leq \operatorname*{ess\inf}_{\mathbb{R}^N\times(0,T)} u.
\end{align*}
There exists a constant $\eta>0$ depending on the data, $M,L,b,\mu^-$ and $[a]_{\alpha,\frac{\alpha}{2}}$ such that if
\begin{align*}
u(\cdot,t_0)\geq \mu^-+L \text{\ a.e. in  } B_\rho(x_0),
\end{align*}
then
\begin{align*}
u\geq \mu^-+bL \text{\ a.e. in }  Q_{\frac{\rho}{2},\frac{\theta}{2}}(x_0,t_0).
\end{align*}
\end{lemma}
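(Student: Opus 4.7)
The plan is to run a De Giorgi iteration on a nested family of forward cylinders, mirroring the structure of Lemma \ref{lem-4-2} but feeding in the pointwise initial bound $u(\cdot,t_0)\geq \mu^-+L$ in place of a measure-smallness hypothesis. Normalize to $(x_0,t_0)=(0,0)$ and take
$$\rho_j=\tfrac{\rho}{2}+\tfrac{\rho}{2^{j+1}},\quad \theta_j=\tfrac{\theta}{2}+\tfrac{\theta}{2^{j+1}},\quad k_j=\mu^-+bL+\tfrac{(1-b)L}{2^j},\quad w_j=(k_j-u)_+,$$
together with smooth cutoffs $\zeta_j$ equal to $1$ on the inner forward cylinder $B_{\rho_{j+1}}\times(0,\theta_{j+1})$ and vanishing on $\partial B_{\rho_j}\times(0,\theta_j)$ as well as at $t=\theta_j$, but crucially \emph{allowed to equal $1$ at the initial time} $t=0$. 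The usual controls $|\nabla\zeta_j|\lesssim 2^j/\rho$ and $|\partial_t\zeta_j^m|\lesssim 2^{spj}/\theta$ (or $2^{qj}/\theta$, according to the dichotomy below) are enforced so that the scaling of $\theta$ and $\rho$ balance in the estimates.

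Next, I would test the supersolution formulation with $\varphi=w_j\zeta_j^m$ to obtain the supersolution analogue of the Caccioppoli inequality of Lemma \ref{lem-2-1} on the forward cylinder. The integration-by-parts identity for the time derivative now produces an initial trace $\tfrac12\int_{B_{\rho_j}}w_j^2(x,0)\zeta_j^m(x,0)\,dx$, which vanishes identically because $u(\cdot,0)\geq \mu^-+L\geq k_j$ forces $w_j(\cdot,0)\equiv 0$; this is precisely the point at which the hypothesis of the lemma enters. All remaining RHS terms are then controlled exactly as in the proof of Lemma \ref{lem-4-2}, using $w_j\leq L$, the assumption $\mu^-\leq \operatorname{ess\,inf}_{\mathbb{R}^N\times(0,T)}u$ to bound the nonlocal tail, and the dichotomy on $\bar a_\rho:=\max_{Q_{\rho,\eta\rho^{sp}}(x_0,t_0)}a$ that dictates whether $\theta=\eta\rho^{sp}$ or $\theta=\eta\rho^{q}$ is the correct intrinsic scaling, so that the local-gradient and fractional contributions merge into a single $L^p/\rho^{sp}$- or $L^q/\rho^q$-type factor multiplied by the sublevel-set measure $|A_j|$.

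Plugging the resulting energy bound into the parabolic Sobolev inequality (Lemma \ref{lem-1-8}) applied on the cylinder, and lower-bounding the left-hand side in the standard way via
$$\iint w_j^{p\kappa}\,dxdt\geq \Bigl(\tfrac{(1-b)L}{2^{j+1}}\Bigr)^{p\kappa}|A_{j+1}|,\qquad A_j:=\{u<k_j\}\cap Q_j^+,$$
one arrives at a recursion of the form $Y_{j+1}\leq K h^j Y_j^{1+sp/N}$ in the nonlocal regime (and $Y_{j+1}\leq \tilde K\tilde h^j Y_j^{1+q/N}$ in the local regime), where $Y_j=|A_j|/|Q_j^+|$ and the constant $K$ is tracked explicitly as a product of $\bigl((1-b)L\bigr)^{-p\kappa}$, a power of $\eta$, and factors depending on $M$, $[a]_{\alpha,\alpha/2}$ and the data.

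The main obstacle, and the crux of the argument, is that we only have the trivial bound $Y_0\leq 1$ rather than the smallness hypothesis available to Lemma \ref{lem-4-2}. To apply the iteration Lemma \ref{lem-1-9} we must therefore force $K$ itself below the threshold $h^{-N/sp}$ (respectively $\tilde h^{-N/q}$), and this is exactly where the free parameter $\eta$ is used: choosing $\eta$ sufficiently small at the intrinsic scale dictated by $L$, in a manner depending on $b,L,M,\mu^-,[a]_{\alpha,\alpha/2}$ and the data, absorbs the bad factors and yields $Y_j\to 0$. Since then $|\{u<\mu^-+bL\}\cap(B_{\rho/2}\times(0,\theta/2))|=0$, the claimed pointwise lower bound on $Q_{\rho/2,\theta/2}(x_0,t_0)$ follows. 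The second, local-dominated case $\bar a_\rho\geq 2[a]_{\alpha,\frac{\alpha}{2}}\rho^\alpha$ reduces to a nonstandard-growth De Giorgi lemma exactly as in our previous paper \cite{SZ23}, and is handled by the analogue of its Lemma 2.1 with the same schematic recursion.
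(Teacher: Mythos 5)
Your overall strategy (forward cylinders, vanishing initial trace via $u(\cdot,t_0)\geq\mu^-+L$, De Giorgi iteration, and choosing $\eta$ small to compensate for the trivial bound $Y_0\leq 1$) is the right one, and it is the route taken in the paper. However, there is a concrete gap in the choice of cutoff that breaks the last step.

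You prescribe a cutoff $\zeta_j$ that vanishes at $t=\theta_j$ and therefore carries the control $|\partial_t\zeta_j^m|\lesssim 2^{spj}/\theta$. With that term present, the Caccioppoli inequality produces, in addition to the expected $L^p/\rho^{sp}$ contribution, a term of order $L^2/\theta$, so the recursion constant (with $\theta=\eta\rho^{sp}$) is of the form
\begin{align*}
K(\eta)\;\asymp\;\frac{C}{(1-b)^{p\kappa}}\left(\frac{\eta}{L^{2-p}}\right)^{\frac{sp}{N}}\left(1+\frac{L^{2-p}}{\eta}\right)^{\frac{N+sp}{N}}.
\end{align*}
This quantity does \emph{not} go to zero as $\eta\to 0$: writing $z=L^{2-p}/\eta$, the function $z\mapsto z^{-sp/N}(1+z)^{(N+sp)/N}$ is bounded below by a positive constant (it is minimized at $z=sp/N$ and blows up as $z\to 0$ or $z\to\infty$). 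Consequently $K(\eta)\gtrsim C/(1-b)^{p\kappa}$ for \emph{every} $\eta$, and the requirement $Y_0=1\leq K^{-N/sp}h^{-N^2/(sp)^2}$ coming from Lemma \ref{lem-1-9} cannot be enforced by tuning $\eta$. The analogous failure occurs in the local regime $\theta=\eta\rho^q$.

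The fix — and the device used in the paper — is to take a \emph{time-independent} cutoff $\zeta=\zeta(x)$ vanishing only near $\partial B_{\rho_j}$. For a forward cylinder with prescribed initial data you need neither a vanishing trace at the top (the $\operatorname*{ess\,sup}$ over $t\in(0,\theta_j)$ is obtained by testing on $(0,\tau)$ for arbitrary $\tau$) nor, of course, at $t=0$ (where the trace is killed by $w_j(\cdot,0)\equiv 0$). With $\partial_t\zeta\equiv 0$ the energy estimate carries only the $2^{j(N+2m)}L^p/\rho^{sp}\,|A_j|$ bound, and the recursion constant becomes
\begin{align*}
K(\eta)\;\asymp\;\frac{C}{(1-b)^{p\kappa}}\left(\frac{\eta}{L^{2-p}}\right)^{\frac{sp}{N}},
\end{align*}
which does tend to $0$ as $\eta\to 0$; choosing $\eta\lesssim (1-b)^{N+2s}L^{2-p}$ then closes the iteration (and $\eta\lesssim(1-b)^{N+2}L^{2-q}(1+L^{p-q})^{-(N+q)/q}$ in the local regime). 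In short, the ingredient you are missing is that the cutoff must be constant in time; once that is in place, your outline matches the paper's proof.
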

\begin{proof}
We may assume $(x_0,t_0)=(0,0)$. Let $k_j,\rho_j,\tilde{\rho}_j,\hat{\rho}_j,\theta_j,\tilde{\theta}_j,\hat{\theta}_j,B_j,\tilde{B}_j,\hat{B}_j$ be as in Lemma \ref{lem-4-2}. Define the forward cylinders
\begin{align*}
Q_j=B_j\times(0,\theta_j),\ \tilde{Q}_j=\tilde{B}_j\times(0,\tilde{\theta_j}),\ \hat{Q}_j=\hat{B}_j\times(0,\hat{\theta}_j).
\end{align*}
Choose the piecewise cutoff function $\zeta(x,t)=\zeta(x)$ vanishing outside $\hat{B}_j$ such that
\begin{align*}
0\leq\zeta\leq 1, \quad \zeta(x)=1 \text{\ in } \tilde{B}_j, \quad |\nabla\zeta|\leq\frac{2^j}{\rho}.
\end{align*}
When $\bar{a}_{\rho}\leq 2[a]_{\alpha,\frac{\alpha}{2}}\rho^\alpha$, a combination of energy estimates \eqref{2.1} in the cylinder $\tilde{Q}_j$ and the fact $u(\cdot,0)\geq \mu^-+L$ yield that
\begin{align*}
&\operatorname*{ess\sup}_{0<t<\theta_j}\int_{\tilde{B}_j} w_j^2(x,t) \,dx+\int_0^{\theta_j} \int_{\tilde{B}_j} \int_{\tilde{B}_j} \frac{\left|w_j(x,t)-w_j(y,t)\right|^p}{|x-y|^{N+sp}}\,dxdydt\\
\leq &C 2^{j(N+2m)}\frac{L^p}{\rho^{sp}}|A_j|,
\end{align*}
where $A_j:=[u<k_j]\cap Q_j$. Let $Y_j=\frac{|A_j|}{|Q_j|}$. Following the same steps as proving Lemma \ref{lem-4-2}, we obtain
\begin{align*}
Y_{j+1}\leq \frac{C h^j}{(1-b)^{p\frac{N+2s}{N}}}\left(\frac{\eta}{L^{2-p}}\right)^{\frac{sp}{N}}|Y_j|^{\frac{N+sp}{N}}
\end{align*}
with $h=h(N,p,s,q)>1$. By Lemma \ref{lem-1-9}, it shows that if
\begin{align*}
Y_0\leq C^{-\frac{N}{sp}} h^{-\frac{N^2}{s^2p^2}}(1-b)^{N+2}\frac{L^{2-p}}{\eta},
\end{align*}
then $Y_j\rightarrow 0$. Thus, we can arrive at the claim by choosing
\begin{align*}
\eta=C^{-\frac{N}{sp}}(1-b)^{N+2s} L^{2-p}.
\end{align*}
Proceeding similarly as above, we may choose $$\eta=C^{-\frac{N}{q}}(1-b)^{N+2}L^{2-q}(1+L^{p-q})^{-\frac{N+q}{q}}$$ in the case $\bar{a}_{\rho}\geq 2[a]_{\alpha,\frac{\alpha}{2}}\rho^\alpha$ to finish the proof.
\end{proof}

The rest of the proof of the inverse inequality of \eqref{4.10} is based on Lemma \ref{lem-4-3}. Since it is similar to [Theorem 3.1, \cite{L21}], we omit the proof here. The proof of Theorem \ref{thm-1-6} is finally complete.

\section*{Acknowledgment}
This work was supported by the National Natural Science Foundation of China (No. 12071098).

\end{document}